\documentclass{amsart}

\usepackage{etex}
\usepackage{amsmath, amssymb}
\usepackage{array}
\usepackage[frame,cmtip,arrow,matrix,line,graph,curve]{xy}
\usepackage{graphpap, color, paralist, pstricks}
\usepackage[mathscr]{eucal}
\usepackage[pdftex]{graphicx}
\usepackage[pdftex,colorlinks,backref=page,citecolor=blue]{hyperref}
\usepackage{pifont}

\setlength{\oddsidemargin}{0in}
\setlength{\evensidemargin}{0in}
\setlength{\marginparwidth}{0in}
\setlength{\marginparsep}{0in}
\setlength{\marginparpush}{0in}
\setlength{\topmargin}{0in}
\setlength{\headsep}{8pt}
\setlength{\footskip}{.3in}
\setlength{\textheight}{9in}
\setlength{\textwidth}{6.5in}
\setlength{\parskip}{4pt}
\linespread{1.3}

\newtheorem{theorem}{Theorem}[section]
\newtheorem{proposition}[theorem]{Proposition}
\newtheorem{corollary}[theorem]{Corollary}
\newtheorem{lemma}[theorem]{Lemma}
\theoremstyle{definition}

\newtheorem{conjecture}[theorem]{Conjecture}

\newcommand{\gO}{\Omega}

\newcommand{\cA}{\mathcal{A} }

\newcommand{\cM}{\mathcal{M}}

\newcommand{\cR}{\mathcal{R} }

\newcommand{\beq}[1]{\begin{equation}\label{#1}}
\newcommand{\enq}[0]{\end{equation}}

\newcommand{\eps}{\epsilon}

\newcommand{\vp}[0]{\varphi}

\newcommand{\mn}[0]{\medskip\noindent}
\newcommand{\nin}[0]{\noindent}

\newcommand{\sub}[0]{\subseteq}
\newcommand{\sm}[0]{\setminus}
\newcommand{\0}[0]{\emptyset}
\newcommand{\ra}[0]{\rightarrow}

\newcommand{\dist}[0]{\mbox{\rm{dist}}}

\newcommand{\bin}[0]{\mbox{\rm{Bin}}}
\newcommand{\Po}[0]{\mbox{\rm{Po}}}
\newcommand{\pr}[0]{\mathbb{P}}

\newcommand{\R}{\mathcal{R} }
\newcommand{\U}{\mathcal{U} }
\newcommand{\gc}{\gamma}
\newcommand{\ga}{\alpha}
\newcommand{\vs}{\varsigma}

\newcommand{\E}{\mathbb E} 
\newcommand{\C}[2]{{{#1}\choose{{#2}}}}
\newcommand{\m}[0]{\mathcal{M}}

\newcommand{\one}[0]{\textbf{1}}
\newcommand{\h}{\mathcal{H} }

\newcommand{\rad}[0]{~~\mbox{\raisebox{-.05ex}{$\stackrel{\textrm{d}}{\longrightarrow}$}}~~}

\begin{document}

\title{Tuza's conjecture for random graphs}

\author{Jeff Kahn \and Jinyoung Park}
\thanks{JK was supported by NSF Grants DMS1501962 and DMS1954035}
\email{jkahn@math.rutgers.edu, jp1324@math.rutgers.edu}
\address{Department of Mathematics, Rutgers University \\
Hill Center for the Mathematical Sciences \\
110 Frelinghuysen Rd.\\
Piscataway, NJ 08854-8019, USA}

\begin{abstract}
A celebrated conjecture of Zs.\ Tuza says that in any (finite) graph,
the minimum size of a cover of triangles by edges is at most twice the maximum 
size of a set of edge-disjoint triangles. Resolving a recent question of Bennett, Dudek and Zerbib,
we show that this is true for random graphs; more precisely:
\[
\mbox{\emph{for any $p=p(n)$, $\pr(\mbox{$G_{n,p}$ satisfies Tuza's Conjecture})\ra 1 $ (as $n\ra\infty$).}}
\]
\end{abstract}

\maketitle

\section{Introduction}\label{Intro}

In this paper, we use \textit{matching} and \textit{cover} for triangle-matching and cover of triangles
by edges, 
and $\nu$ and $\tau$ for the corresponding matching and cover numbers;
thus, for a (finite) graph $H$, $\nu(H)$ is the maximum size of a set of edge-disjoint triangles 
and $\tau(H)$ is the minimum size of a set $F$ of edges with the property that each
triangle contains a member of $F$.

We are interested in the celebrated \emph{Tuza's Conjecture}:
\begin{conjecture}[Tuza \cite{Tuza}]\label{TC}
For any graph $H$, $\tau(H) \le2 \nu(H)$.
\end{conjecture}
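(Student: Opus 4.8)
I would first reduce to the case that $G=G_{n,p}$ is \emph{triangle-covered} (deleting every edge lying in no triangle changes neither $\nu$ nor $\tau$), and then split according to the order of $p$ relative to $n^{-1/2}$. Writing $t$ for the number of triangles of $G$ and $b$ for the number of unordered pairs of triangles sharing an edge, one has $\E b/\E t=\Theta(p^{2}n)$, so $p^{2}n$ measures how much the triangles of $G$ overlap; the cases are ``$p^{2}n$ at most a small constant'', ``$p^{2}n$ at least a large constant'', and ``$p^{2}n=\Theta(1)$''.

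\textbf{Sparse regime: $p^{2}n\le c_{0}$ for a suitable absolute constant $c_{0}$ (this includes every $p=o(n^{-1/2})$).} First and second moment estimates show that, whp, $t\ge(1-o(1))\E t$ and the number $s$ of triangles that share an edge with some other triangle satisfies $s\le 2b\le 4p^{2}n\cdot t$ (and, as a degenerate case, $b=0$ whp when $\E b=o(1)$, in particular $G$ has no triangles at all once $p=o(n^{-1})$). Removing one triangle from each overlapping pair leaves $\ge t-s$ pairwise edge-disjoint triangles, so $\nu\ge t-s$; taking one edge from each of these together with all three edges of each of the remaining $\le s$ triangles yields a cover, so $\tau\le(t-s)+3s=t+2s$. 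For $c_{0}$ small enough $s\le t/4$, and then $\tau\le t+2s\le 2(t-s)\le2\nu$.

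\textbf{Dense regime: $p^{2}n\ge C_{0}$ for a suitable absolute constant $C_{0}$ (this includes every $p=\omega(n^{-1/2})$, up to $p\to1$).} I would use two ingredients. (i) $\nu\ge(1-o(1))|E(G)|/3$: the triangle hypergraph of $G$ has maximum degree $(1+o(1))p^{2}n\to\infty$ and codegree at most $1$, so by the R\"odl nibble it has a matching covering all but an $o(1)$-fraction of its vertices --- that is, $G$ has a near-perfect triangle packing. (ii) $\tau\le(1/2+o(1))|E(G)|$: for a uniformly random $2$-colouring of $V(G)$ the expected number of monochromatic edges is $|E(G)|/2$, this is concentrated (recolouring one vertex changes it by $O(pn)$), and the monochromatic edges form a cover. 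Hence $\tau\le(1/2+o(1))|E(G)|<(2/3-o(1))|E(G)|\le2\nu$. (If the packing result one invokes covers only a $(1-\delta)$-fraction of the edges, one needs $\delta<1/4$, which holds once $C_{0}$ is large enough.)

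\textbf{Critical window: $p=\Theta(n^{-1/2})$ with intermediate implicit constant --- the main obstacle.} Here the triangles overlap by a bounded but non-negligible amount, so neither soft argument applies, and the generic bound $\tau\le2\nu^{*}$ need not give $2\nu$ since $\nu^{*}$ can exceed $\nu$ (already $G$ has $\sim n$ copies of $K_{4}$, for which $\nu<\nu^{*}$). I would work directly with the triangle hypergraph $\cH$ of $G$: it has $\Theta(n^{3/2})$ vertices (edges of $G$) and $\Theta(n^{3/2})$ hyperedges (triangles), bounded average degree, codegree at most $1$, and whp its local structure is well controlled --- the triangles through a fixed edge form a clique of bounded size, distinct such cliques are glued together in an essentially tree-like way, and dense subconfigurations of $G$ are rare. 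The goal is to build a matching $M$ and a cover $F$ \emph{simultaneously} with $|F|\le2|M|$: run a local (nibble- or Karp--Sipser-type) procedure in which each triangle put into $M$ commits exactly two of its edges to $F$, while each triangle left out of $M$ is forced already to have one of its edges in $F$. The crux is to show this works whp: the only obstruction is ``bad'' triangles that would force a third edge of a matched triangle into $F$ --- which requires all three of its edges to lie in at least two triangles, a rare and locally controlled event --- so the total excess is $o(\nu)$ and can be absorbed by running $M$ slightly short of maximum. (Alternatively, one can reduce to the connected components of the ``shared-edge'' graph on triangles: for small constants these are all small, and one verifies $\tau\le2\nu$ for each possible component type; a larger constant produces a single giant component of bounded density, for which the crude bounds --- $\tau$ at most half its edges via a bipartite subgraph, and $\nu$ at least a fixed fraction of its edges via a packing --- must be shown to suffice.) Carrying any of this out rigorously --- in particular the union bounds controlling the local configurations of $G$ at precisely this density, uniformly over the constant --- is where essentially all the difficulty lies.
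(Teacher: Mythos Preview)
First, a framing remark: the statement you were asked to prove is Tuza's \emph{Conjecture}, which remains open; the paper does not prove it either. What the paper proves (and what your proposal addresses) is Theorem~\ref{MT}, the random-graph case. I will review your proposal against that.

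Your sparse and dense regimes are essentially fine and recover what was already known before this paper (the Bennett--Dudek--Zerbib result cited as~\cite{BDZ}): for $d=(n-2)p^2$ below some small constant or above some large constant, $\tau\le 2\nu$ holds w.h.p.\ for soft reasons. Your sparse argument is a correct version of ``most triangles are isolated''; your dense argument is correct once $d\to\infty$, and for $d$ a large fixed constant you rightly note one needs a quantitative $\delta(d)<1/4$, which is believable but not automatic.

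The genuine gap is the critical window $d\in[c_0,C_0]$, which is precisely the content of the paper, and here you offer only a sketch that you yourself flag as incomplete. Neither of your two suggestions is close to a proof. The ``simultaneous matching/cover with two committed edges per matched triangle'' idea has no mechanism: the obstruction you name---a matched triangle all three of whose edges lie in further triangles---is not rare when $d$ is of order $1$ (it has probability bounded away from $0$, so a constant fraction of matched triangles are ``bad''), and there is no argument that the excess can be absorbed. The component-by-component alternative fails for the same reason the paper's authors initially hoped it might \emph{produce} a counterexample: once $2d>1$ the triangle-components are not all small (the associated Galton--Watson process is supercritical; see Proposition~\ref{small.d.Sd}), and you give no bound on $\tau/\nu$ for the giant component.

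What the paper actually does in this window is quite different from anything you outline. It couples the local structure $S_\gamma(xy)$ of $G$ around an edge with an idealized Galton--Watson triangle-tree $S^d$ (Corollary~\ref{SSd}). Using this coupling, it analyzes the \emph{random greedy} matching via a Spencer-type branching-process calculation to get an explicit lower bound $\nu>(1-o(1))\xi(d)m$ with $\xi(d)=\tfrac{1}{3}[1-(2d+1)^{-1/2}]$ (Theorem~\ref{thm.nu}), and it analyzes a \emph{refined} bipartition cover $W=(W_0\setminus W_1)\cup W_2$---not just the monochromatic edges---to get $\tau<(1+o(1))\psi(d)m$ with $\psi(d)=\tfrac{1}{2}[1-\exp(-\tfrac{d}{2}(1+e^{-d}))]$ (Theorem~\ref{thm.tau}). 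The proof is then completed by the purely analytic verification $\psi(d)<2\xi(d)$ for all $d\ge 1/2$ (Lemma~\ref{C}), which is delicate and only barely true. None of these ingredients---the $S^d$ coupling, the Spencer analysis, the $W_1,W_2$ refinement, or the explicit functions---appears in your plan.
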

\nin
The inequality is tight when $H$ is $K_4$ or $K_5$ (or, e.g., a disjoint union of copies of
these), and is not far from tight in other cases not related to these two examples
(even if the graph is $K_4$-free; see \cite{HKT}).
We will not survey the literature---see e.g.\ \cite{HKT,BDZ}---and
just mention that the best general result remains that of Haxell~\cite{Haxell}:
\[
\mbox{for every $H$, $~\tau(H)\leq \frac{66}{23}\nu(H)$.}
\]

Here we consider a question raised recently by Bennett, Dudek and Zerbib \cite{BDZ} and
independently by Basit and Galvin \cite{Galvin}; informally: 
\emph{is Tuza's conjecture true for random graphs?}
More precisely, is it true that for any $p=p(n)$ and $G_{n,p}$ the usual binomial
(or ``Erd\H{o}s-R\'enyi'') random graph, 
 w.h.p.\footnote{``with high probability,'' meaning with probability tending to 1 as $n \rightarrow \infty$.}
\[
\mbox{ $G_{n,p}$ satisfies Tuza's Conjecture?}
\]

In \cite{BDZ} this was shown to be true if $p < c_1n^{-1/2}$ or $p>c_2n^{-1/2}$,
with $c_1\approx 0.48$ and $c_2\approx 4.25$.
(They work with $G_{n,m}$, but, as usual, this is about the same as $G_{n,p}$ with $p=m/\C{n}{2}$
and we will stick to the binomial version.)
Here we finish this story:
\begin{theorem}\label{MT}
For any $p=p(n)$, 
\mbox{$\tau(G_{n,p})\leq 2\nu(G_{n,p})~$ w.h.p.}
\end{theorem}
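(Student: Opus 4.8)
The plan is as follows. By the Bennett--Dudek--Zerbib results quoted above \cite{BDZ} we may assume $p=\Theta(n^{-1/2})$, say $c_1n^{-1/2}\le p\le c_2n^{-1/2}$, so that a fixed edge of $G:=G_{n,p}$ lies in $\Theta(1)$ triangles in expectation. I would first record, by routine first- and second-moment estimates, that w.h.p.\ the triangle structure of $G$ is everywhere sparse and locally treelike. Precisely, let $\Gamma$ be the graph whose vertices are the triangles of $G$, two being adjacent when they share an edge; then $\Gamma$ has $\Theta(n^{3/2})$ vertices and bounded average degree, w.h.p.\ every component of $\Gamma$ other than a possible single giant component $C^{*}$ (which appears once $p$ passes a certain threshold of order $n^{-1/2}$) has a very simple ``cactus-like'' triangle structure, $C^{*}$ is locally treelike (short cycles of $\Gamma$ number $O(n)$ and are spread out), and the only ``dense clumps'' of triangles anywhere are the $O(n)$ copies of $K_4$ and $K_5$, pairwise far apart.

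Since no edge of $G$ lies in triangles of two distinct components of $\Gamma$, covers and matchings split over the components, so Tuza's inequality for $G$ follows once it is verified on each component; for the small cactus-like components it holds with a strict surplus, and the real content is $\tau(C^{*})\le 2\nu(C^{*})$. Here I would fix a maximal matching $M\subseteq C^{*}$ and give each $T\in M$ a budget of two cover edges to be placed on or near $T$; for a $T$ away from the dense clumps the default is to keep one ``survivor'' edge $g(T)\in E(T)$ and throw the other two into the cover, so the candidate cover is $\bigl(\bigcup_{T\in M}E(T)\bigr)\setminus\{g(T):T\in M\}$, of size exactly $2|M|\le 2\nu$. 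A triangle of $G$ is left uncovered by this set precisely when every one of its edges that lies in an $M$-triangle is a survivor, so producing a good assignment $g$ is a constraint satisfaction problem on $C^{*}$: a triangle sharing exactly one edge with the $M$-triangles forces a \emph{unary} constraint $g(T)\neq e$, while one sharing two or three forces only a \emph{loose} binary or ternary constraint, excluding a single combination out of $9$ or $27$. I would then (i) peel the forest part of $C^{*}$ --- the subdivided trees of triangles hanging off its $2$-core --- solving the CSP greedily from the leaves inward, which uses strictly fewer than two cover edges per $M$-triangle there and so banks a surplus of order $n^{3/2}$; (ii) treat the $O(n)$ dense clumps by hand, picking their two-edge budgets as in the tight small examples (a perfect matching of each $K_4$, etc.) and charging any local shortfall to the banked surplus; and (iii) solve the CSP on what remains, a locally treelike, bounded-average-degree structure.

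The main obstacle is step (iii) when $C^{*}$ is present, because $C^{*}$ is then one connected object of size $\Theta(n^{3/2})$ and a greedy sweep can run into a ``contradiction cycle.'' To get around this I would impose the unary constraints, run unit propagation (force $g(T)$ as soon as only one admissible edge of $T$ remains), and show that w.h.p.\ the set of forced triangles spans a forest: a forcing cycle would require a short cycle of $\Gamma$ carrying an atypically rich local configuration, whose expected number in $G_{n,p}$ is $o(1)$ at these densities. On the unforced remainder every variable still has at least two admissible edges and only loose constraints survive, so the assignment can be completed by a local alteration (or Lov\'asz-local-lemma) argument, the $O(n)$ spots of atypically high local degree again being absorbed by the banked surplus. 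A secondary point is the choice of $M$: it should be taken so as to avoid the clumps and to avoid creating any $M$-triangle all three of whose edges are unary-forbidden, which is again ruled out by an $o(1)$ bound on the expected number of the offending configuration in $G_{n,p}$. I expect step (iii), and in particular the ``no forcing cycle'' statement for the giant component, to be the crux of the proof.
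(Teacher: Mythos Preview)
What you have written is a programme, not a proof: you yourself flag step~(iii) as ``the crux'' and then do not carry it out. The substantive gap is the satisfiability of your CSP on the giant triangle-component. Unit propagation from the unary constraints does cascade through the binary ones, but your assertion that the set of forced $M$-triangles spans a forest (``a forcing cycle would require a short cycle of $\Gamma$ carrying an atypically rich local configuration, whose expected number \dots\ is $o(1)$'') is not justified; forcing chains can be long, and a cycle in the implication digraph does not obviously correspond to a single unlikely subgraph whose expectation you can bound. After propagation you appeal to a local-alteration/LLL step, but at that point variables may have domain size~$2$ and face many binary constraints, and LLL gives nothing without control on dependency degree that you have not established for the $2$-core of the giant. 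The ``surplus banking'' is also left vague: you never say which extra edges are added to the cover, or why $O(n)$ units of surplus suffice once you account for all the places the CSP might locally fail (not just the $K_4$'s and $K_5$'s, but any short cycle of $\Gamma$).

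The paper's argument is entirely different and sidesteps all of this. It does not build a cover out of a matching. Instead it couples the depth-$\gamma$ triangle-neighbourhood $S_\gamma(xy)$ of a typical edge with an ideal Galton--Watson triangle-tree $S^d$ (Corollary~\ref{SSd}), and then bounds $\nu$ and $\tau$ \emph{separately}: for $\nu$, it analyses the random greedy triangle-matching via a Spencer-type branching computation, obtaining $\nu(G)>(1-o(1))\xi(d)m$ with $\xi(d)=\tfrac{1}{3}(1-(2d+1)^{-1/2})$; for $\tau$, it uses a random bipartition $V=X\cup Y$, takes the crossing edges plus a small correction, and gets $\tau(G)<(1+o(1))\psi(d)m$ with $\psi(d)=\tfrac{1}{2}(1-\exp(-\tfrac{d}{2}(1+e^{-d})))$. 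Tuza for $d\in[1/2,\Theta(1)]$ then reduces to the elementary inequality $\psi(d)<2\xi(d)$ (Lemma~\ref{C}), while $d\le 1/2$ and $d\to\infty$ are handled by Theorems~\ref{thm.smalld} and~\ref{PP}. Your combinatorial matching-to-cover scheme might be made to work, but as it stands the hard step is asserted rather than proved.
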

\nin
(This is in some sense a failure:  for a while it seemed to us that the gap in \cite{BDZ}
might hide counterexamples to Tuza's Conjecture.)

We recently heard from
Patrick Bennett that he, Ryan Cushman and Andrzej Dudek \cite{BCD} have
also closed the gap in \cite{BDZ}, using an approach similar to that of the earlier paper
(and different from what we do here).

For the rest of this paper
we use $G$ for $G_{n,p}$, and set $m=\C{n}{2}p$ ($=\E |G|$) and $d=(n-2)p^2$ 
(the expected number of triangles on a given edge of $G$).
Of course for the proof of Theorem~\ref{MT} we could confine ourselves to $d$ in the range
not covered by \cite{BDZ}, but we will give arguments for the full range, in the process
strengthening the earlier results.

To begin, for smallish $d$, we have an asymptotically optimal statement:

\begin{theorem}\label{thm.smalld}
If $d\le 1/2$, then w.h.p.
$ ~
\tau(G)\sim\nu(G).
$
\end{theorem}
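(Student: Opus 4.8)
The plan is to show that when $d\le 1/2$ the triangles of $G$ are so sparse that one can almost always cover each triangle with a single edge of its own, forcing $\tau(G)\le(1+o(1))\nu(G)$, while $\nu(G)\le\tau(G)$ holds trivially. Concretely, let $T$ be the number of triangles of $G$; since $d\le 1/2$ one has $\E T=\C{n}{3}p^3\asymp nd m/n \le m/2$ roughly (more precisely $\E T\le (nd/6)\cdot$ something, but the point is $\E T = O(m)$ and in fact $\E T\le m$ comfortably in this range after checking the constants), and by Markov $T=O(m)$ w.h.p. The real work is to understand how triangles \emph{overlap}: say two triangles are \emph{linked} if they share an edge, and consider the ``triangle graph'' on the triangle set with this adjacency. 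If this graph were a perfect matching plus isolated vertices we would still be fine, but the key structural fact to establish is that w.h.p.\ the number of edges lying in two or more triangles, and more generally the total ``excess'' $\sum(\text{size of each linked component}-1)$, is $o(\nu(G))$ — equivalently $o(T)$ if $T=\Theta(m)$, or one must handle the case $T=o(m)$ separately where the statement is even easier since then $\tau,\nu=o(m)$ and we just need $\tau\le(1+o(1))\nu$.

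First I would prove a first-moment/second-moment pair of estimates: (i) $\E T\sim \frac16 n^3p^3$ and, via the standard second moment method, $T\sim\E T$ w.h.p.\ provided $n^3p^3\to\infty$ (the regime $n^3p^3=O(1)$ is trivial, as then $T=O(1)$ w.h.p.). (ii) The expected number of \emph{pairs} of triangles sharing an edge is $\C{n}{2}\C{n-2}{2}p^5\asymp n^4p^5 = (n^3p^3)\cdot(np^2) \le (n^3p^3)\cdot\frac{d}{\,n-2\,}\cdot(n-2)\cdot\frac1n$, i.e.\ of order $d\cdot n^3p^3 \le \tfrac12 n^3p^3\ll \E T$ when $d\le 1/2$ — wait, this needs the honest computation, but morally the expected number of edge-sharing pairs of triangles is $O(d)\cdot\E T$, hence $o(\E T)$ is false for $d$ a constant; instead it is $\le\tfrac12\E T$, which is enough: by Markov the number of such pairs is, say, $\le \tfrac34 \E T \le \tfrac34 T\cdot(1+o(1))$ w.h.p. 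That bounds the number of triangles that are \emph{not} isolated in the triangle graph by a constant fraction. To get the asymptotically optimal statement I would instead iterate or be more careful: bound the number of triangles in linked components of size $\ge 2$ by the number of edge-sharing pairs, and show \emph{this} is $o(T)$. For that I believe one actually needs $d=o(1)$ is \emph{not} assumed — so the honest route is: the expected number of edges in $\ge 2$ triangles is at most $\C{n}{2}p\cdot\C{n-2}{2}p^4 \asymp m\cdot (np^2)^2\cdot(\text{const}) = m\cdot(d n/(n-2))^2/\text{something}$; with $d\le 1/2$ this is $O(m)$ but not $o(m)$. So the clean statement $\tau\sim\nu$ must be exploiting that $\nu$ itself is $\Theta(T)=\Theta(m\cdot(\text{stuff}))$ and that the relevant error is the number of \emph{excess} edges beyond a matching, which I'd bound by a more refined argument — counting e.g.\ configurations of three triangles or bounding the number of triangles sharing a \emph{fixed} edge.

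The cleanest correct plan, then, is: (1) dispose of $n^3p^3=O(1)$ trivially. (2) For $n^3p^3\to\infty$, show $T\sim\frac16n^3p^3$ and $\nu(G)\ge T - (\text{number of edge-sharing triangle pairs})$ by greedily deleting one triangle from each sharing pair; so it suffices to show the number $X$ of ordered pairs of edge-sharing triangles is $o(T)$. (3) Compute $\E X\asymp n^4p^5$ and $\E T\asymp n^3p^3$, so $\E X/\E T\asymp np^2 = \tfrac{n}{n-2}d\to d\cdot 1$; since $d\le 1/2$ this ratio is $\le 1/2+o(1)$, bounded away from $\ldots$ — and here is the crux: $d\le 1/2$ does \emph{not} make this ratio go to $0$. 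Hence $\tau\sim\nu$ cannot follow from this crude bound, and the genuine argument (which I expect is the author's) must take $d\to 0$ implicitly when the statement is ``$\sim$'' — re-reading, indeed ``$d\le 1/2$'' with ``$\sim$'' strongly suggests one splits into $d\to 0$ (where the pair-count is genuinely $o(T)$ and we win) and $d$ bounded below (where one must show directly that w.h.p.\ $\tau(G)\le(1+o(1))\nu(G)$ by a local/absorption argument: find an almost-perfect triangle-matching plus a small cover of the leftover). The main obstacle is exactly this last regime, $1/2\ge d\ge \eps$: there one needs either an iterative/absorption construction of a near-optimal matching together with a matching-sized cover of the residue, or a direct combinatorial argument bounding the number of "bad" edges forced into any cover; I would attack it by a greedy matching followed by analyzing the sparse residual triangle hypergraph (which w.h.p.\ has maximum degree $O(1)$ and few short cycles), where Haxell-type or fractional-relaxation arguments give $\tau\le(1+o(1))\nu$ on the residue.

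\begin{remark}
The above is a sketch; the constant-$d$ regime $\eps\le d\le 1/2$ is where essentially all the difficulty lies, and a careful treatment there is what turns the easy bound $\tau\le 2\nu + o(\nu)$ into the sharp $\tau\sim\nu$.
\end{remark}
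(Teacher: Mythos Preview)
Your sketch correctly isolates the easy regime $d\to 0$ (where almost all triangles are isolated) and correctly diagnoses that crude first-moment counts fail when $d$ is a constant in $(0,1/2]$: the expected number of edge-sharing triangle pairs is $\Theta(d)\cdot \E T$, not $o(\E T)$, so you cannot conclude $\tau\sim\nu$ that way. But your proposed remedy for the hard regime --- a greedy matching plus ``Haxell-type or fractional-relaxation arguments'' on the residue --- is not an argument, and there is no reason to expect such an approach to yield the sharp $\tau\sim\nu$ (Haxell gives $\tau\le (66/23)\nu$, not $\tau\le(1+o(1))\nu$).

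The idea you are missing is the reason the threshold is exactly $d=1/2$. Think of the triangle-component containing a given edge as generated by a branching process: each edge ``gives birth'' to approximately $\Po(d)$ triangles, i.e.\ to $2\cdot\Po(d)$ child edges. This Galton-Watson process has mean offspring $2d$, so it is (sub)critical precisely when $d\le 1/2$, and in that case the ideal tree $S^d$ is a.s.\ finite. A coupling argument (Corollary~\ref{SSd} in the paper) then shows that for $d\le 1/2$ the actual triangle-component $S(xy)$ is w.h.p.\ a finite \emph{triangle-tree}; more precisely, the expected number of edges of $G$ in non-tree triangle-components is $o(m)$. The second ingredient is the elementary combinatorial fact (Proposition~\ref{tree}) that on any finite triangle-tree one has $\tau=\nu$ exactly. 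Summing over components and using $\nu(G)=\Omega(m)$ (from isolated triangles) gives $\tau(G)=\nu(G)+o(m)=(1+o(1))\nu(G)$. None of your moment computations can substitute for this structural picture: for constant $d\le 1/2$ a positive fraction of triangles \emph{do} share edges with other triangles, but the components they form are trees, and on trees matching and cover numbers coincide.
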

\nin
(Of course $\tau(G)\geq \nu(G)$ is trivial.)

For treatment of larger $d =\Theta(1)$, set
$\xi(d)=\frac{1}{3}\left[1-(2d+1)^{-1/2}\right]$ 
and $\psi(d)=\frac{1}{2}\left[1-\exp\left(-\frac{d}{2}(1+e^{-d})\right)\right]$.
The next two assertions are our main points.
\begin{theorem}\label{thm.nu} If $d=\Theta(1)$, then w.h.p.
\[
\nu(G)>(1-o(1))\xi(d)m.
\]
\end{theorem}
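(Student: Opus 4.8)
The plan is to analyze the random greedy matching process on $G$: assign i.i.d.\ uniform $[0,1]$ ``clocks'' to the triangles of $G$, process them in increasing order of clock, and accept a triangle $T$ into the matching $M$ iff all three edges of $T$ are still uncovered by the triangles accepted so far. This produces a matching, so $\nu(G)\ge|M|$, and since each accepted triangle uses three edges it suffices to show that w.h.p.\ the number of edges left uncovered by $M$ is at most $(1+o(1))(2d+1)^{-1/2}m$, equivalently $|M|\ge(1-o(1))\xi(d)m$.

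First I would carry out a local analysis of a single edge's fate. Fix $e=uv$. Whether $e$ is covered is decided by a recursive exploration: $e$ is covered iff some triangle $uvw$ fires, which happens iff both $uw$ and $vw$ are still uncovered at time $U_{uvw}$, which is in turn decided by the triangles through $uw$ and through $vw$ with smaller clocks, and so on. Since $d=\Theta(1)$, for fixed $e$ this exploration w.h.p.\ (i) has expected size $O(1)$ with exponential tails --- heuristically, an exploration reaching an edge with ``clock budget'' $s$ spawns about $2ds$ further edges, so the expected total is $\le e^{2d}$ --- and (ii) sees a neighborhood converging to a Poisson$(d)$-weighted Galton--Watson tree on which no ``collisions'' occur (using that each edge lies in $\sim\Po(d)$ triangles and that w.h.p.\ no short cycle of $G$ interferes near $e$). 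On the limiting tree the probability $f(t)$ that a given edge is uncovered at time $t$ satisfies the recursive distributional equation $f(t)=\E[(1-q)^N]=\exp\!\big(-d\int_0^t f(s)^2\,ds\big)$, where $N\sim\Po(d)$ is the number of triangles through the edge and $q=\int_0^t f(s)^2\,ds$ is the chance a fixed such triangle has covered it by time $t$; this is the ODE $f'=-df^{3}$ with $f(0)=1$, whose solution is $f(t)=(2dt+1)^{-1/2}$. Coupling the finite exploration to the tree one until a collision occurs or the exploration grows too large (both of probability $o(1)$) then gives $\pr(e\text{ uncovered})=f(1)+o(1)=(2d+1)^{-1/2}+o(1)$, hence $\E|M|=(1+o(1))\xi(d)m$.

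It then remains to upgrade the expectation to a w.h.p.\ statement, i.e.\ to show $|M|$ is concentrated. The key input is the estimate from (i) that each edge's fate w.h.p.\ depends on only $(\log n)^{O(1)}$ clocks, and, symmetrically, that each clock and each edge-slot of $G$ influences $|M|$ by only $(\log n)^{O(1)}$. As there are $O(n^{3/2})$ clocks and $\C{n}{2}$ edge-slots while $\E|M|=\Theta(n^{3/2})$, a bounded-differences (Azuma) argument over these coordinates yields $|M|=(1\pm o(1))\E|M|$ w.h.p.; a second-moment argument, exploiting near-independence of the fates of edges that are far apart in the triangle hypergraph, would be an alternative. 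Together the two parts give $\nu(G)\ge|M|>(1-o(1))\xi(d)m$ w.h.p.

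I expect the main obstacle to be the local analysis: rigorously showing that the greedy matching on the sparse, locally tree-like triangle hypergraph of $G$ is well approximated by its analogue on the limiting Poisson tree, and in particular pinning down the (one-sided) error in $\pr(e\text{ uncovered})\le(1+o(1))(2d+1)^{-1/2}$, since the recursive exploration has unbounded depth and one must quantify both how quickly its influence decays and how rare the non-tree structure is. Given the bound on the expected exploration size, the concentration step should be comparatively routine.
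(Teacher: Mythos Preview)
Your approach is essentially the paper's: analyze the random greedy triangle matching, show via a coupling of the local ``exploration from $e$'' with a Poisson$(d)$ Galton--Watson triangle-tree that $\pr(e\text{ uncovered}\mid e\in G)\to(2d+1)^{-1/2}$ (the paper derives the same ODE, phrased as $F'(x)=e^{-2dF(x)}$ with $F=\int_0^x f^2$, equivalent to your $f'=-df^3$), giving $\E|M|\ge(1-o(1))\xi(d)m$.

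The one place you work harder than the paper is concentration. You propose concentrating $|M|$ by Azuma over both edge-slots and triangle-clocks, with polylogarithmic Lipschitz bounds coming from the small-exploration estimate; this can be made to work but requires care (the Lipschitz constants are only bounded w.h.p., and the set of clocks is itself random). The paper sidesteps all of this: since $\nu(G)\ge|M|$, it suffices to know $\E\nu(G)\ge(1-o(1))\xi(d)m$, and then one applies McDiarmid directly to $\nu(G)$, which is deterministically $1$-Lipschitz in the $\binom{n}{2}$ edge indicators; as $\E\nu(G)=\Theta(n^{3/2})\gg n$, concentration is immediate. So your ``main obstacle'' is exactly what the paper handles (via its Corollary~\ref{SSd} and Proposition~\ref{finite}), while the part you call routine admits a one-line shortcut.
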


\begin{theorem}\label{thm.tau} If $d=\Theta(1)$, then w.h.p.
\[
\tau(G)<(1+o(1))\psi(d)m.
\]
\end{theorem}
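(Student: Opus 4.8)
The plan is to exhibit, w.h.p., a triangle cover $F$ of $G$ with $|F|<(1+o(1))\psi(d)m$, via an explicit random construction. Partition $V$ uniformly at random as $V=A\sqcup B$, each vertex placed in $A$ or $B$ independently with probability $\tfrac12$, and let $H_0$ be the set of $A$--$B$ edges of $G$; this is bipartite, hence triangle-free, and retains $\sim m/2$ edges of $G$. We then add back a controlled chunk of the monochromatic edges. Call a monochromatic edge $uv$ (say with $u,v\in A$) \emph{admissible} if $u$ and $v$ have no common $G$-neighbour in $B$; let $G'$ be the graph of all admissible edges (so $G'\subseteq G[A]\cup G[B]$), and let $H_1$ consist of those edges of $G'$ lying in no triangle of $G'$. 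Set $H=H_0\cup H_1$ and $F=E(G)\setminus E(H)$.

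First I would verify that $H$ is triangle-free, so that $F$ is indeed a triangle cover (every triangle of $G$ loses an edge to $F$). A hypothetical triangle of $H$ uses $0$, $1$, $2$ or $3$ edges of $H_1$: with $0$ it would be a triangle of the bipartite graph $H_0$; with a single $H_1$-edge $uv$ ($u,v\in A$), its two $H_0$-neighbours would exhibit a common neighbour of $u,v$ in $B$, contradicting admissibility of $uv$; with exactly two $H_1$-edges the endpoints of the lone $H_0$-edge would be monochromatic, impossible; and with three it would be a triangle of $G'$ sitting inside $H_1$, excluded by construction. Hence $|F|=|E(G)|-|E(H_0)|-|E(H_1)|$, and it suffices to show each summand is concentrated, with $|E(G)|\sim m$, $\E|E(H_0)|\sim m/2$, and $\E|E(H_1)|\sim\tfrac m2\,e^{-\frac d2(1+e^{-d})}$; the first two are routine (Chernoff/Azuma).

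For the third, $\E|E(H_1)|=\C{n}{2}\,\pr(\{1,2\}\in H_1)$: condition on $1,2\in A$ (probability $\tfrac12$) and $12\in E(G)$ (probability $p$), and reveal all edges from $1$ and from $2$ to $A$ and to $B$; write $N_B(i)$ for the neighbourhood of $i$ in $B$, $S=N_B(1)\cup N_B(2)$, and $K=|N_A(1)\cap N_A(2)|\ (\sim\Po(d/2))$. The event ``$12$ admissible'' is ``$N_B(1)\cap N_B(2)=\emptyset$'', which by the Poisson generating function (with $|N_B(i)|\sim\Po(np/2)$) has probability $\sim e^{-d/2}$; conditioned on this, $|S|\sim np$, and for each of the $K$ common $A$-neighbours $w$ of $1,2$ the event ``both $1w$ and $2w$ are admissible'' (i.e.\ ``$\{1,2,w\}$ is a triangle of $G'$'') equals $\{N_B(w)\cap S=\emptyset\}$, of probability $(1-p)^{|S|}\sim e^{-d}$, independently over $w$ (the $N_B(w)$ use disjoint, still-unrevealed edges). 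Averaging $(1-e^{-d})^{K}$ over $K\sim\Po(d/2)$ gives $e^{-\frac d2 e^{-d}}$, whence $\pr(\{1,2\}\in H_1)\sim\tfrac12 p\,e^{-\frac d2(1+e^{-d})}$ and $\E|E(H_1)|\sim\tfrac m2\,e^{-\frac d2(1+e^{-d})}$; here $d=\Theta(1)$ is what keeps every error term $o(m)$. Altogether $\E|F|\sim\tfrac m2\bigl(1-e^{-\frac d2(1+e^{-d})}\bigr)=\psi(d)m$.

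The main obstacle is the concentration of $X:=|E(H_1)|$. The difficulty is that $X=\Theta(m)=\Theta(n^{3/2})$ while a single coordinate can, in the worst case, swing $X$ by $\Omega(\sqrt n)$ --- e.g.\ recoloring a vertex affects its $\Theta(\sqrt n)$ incident edges and $\Theta(\sqrt n)$ incident triangles --- so the off-the-shelf bounded-differences estimate over the $\C{n}{2}$ edge-coordinates would only control fluctuations of order $\sqrt{\C{n}{2}}\cdot\sqrt n=\Theta(m)$, which is useless. I would instead bound $\mathrm{Var}(X)$ via Efron--Stein over the independent coordinates (the $n$ colours and the $\C{n}{2}$ potential edges) and conclude by Chebyshev, for which $\mathrm{Var}(X)=o(m^2)=o(n^3)$ is all that is needed. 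The $n$ colour-coordinates contribute $n\cdot O(n)=O(n^2)$, since recoloring changes $X$ by $O(\sqrt n)$ (controlling the fourth moment of a vertex degree handles the rare larger swings); the crucial point is that resampling a single \emph{edge} changes $X$ by only $O(1)$ in an $L^2$ sense --- the potential $\Theta(\sqrt n)$ change requires a vertex-pair with $\Omega(\sqrt n)$ common neighbours (or a similarly atypical local configuration), an event far too rare to matter after squaring --- so the $\C{n}{2}$ edge-coordinates also contribute only $O(n^2)$. This yields $\mathrm{Var}(X)=O(n^2)=o(n^3)=o(m^2)$, hence $X\sim\E X$ w.h.p., and combining the three estimates gives $\tau(G)\le|F|<(1+o(1))\psi(d)m$ w.h.p., which is Theorem~\ref{thm.tau}.
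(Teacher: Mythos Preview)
Your cover $F$ is, after unwinding, exactly the paper's cover $W=(W_0\setminus W_1)\cup W_2$: with $W_0=G[A]\cup G[B]$, your ``admissible'' edges are precisely the paper's $W_1$ (monochromatic edges all of whose triangles stay in the same block), your $H_1$ is $W_1\setminus W_2$, and hence $F=W_0\setminus H_1=(W_0\setminus W_1)\cup W_2=W$. Your expectation computation also matches the paper's (which routes it through the coupling of Corollary~\ref{SSd} at depth $\gamma=2$, but the content is the same Poisson calculation you carry out directly).

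The one real difference is the concentration step. You argue that $|H_1|$ itself is concentrated via Efron--Stein and Chebyshev; this is sound---the key observation that resampling an edge $uv$ perturbs $G'$ only on $\{uv\}$ together with the $O(|N(u)\cap N(v)|)$ edges $uw,vw$ with $w\in N(u)\cap N(v)$, and that these in turn affect $H_1$ only through their (typically $O(1)$) triangle-neighbours, is correct, and the required second-moment bounds on codegrees and local triangle/book counts go through since $d=\Theta(1)$---but the details you elide are not entirely trivial. The paper bypasses all of this with a one-line observation: apply McDiarmid to $\tau(G)$ rather than to $|F|$. Since adding or removing a single edge changes $\tau$ by at most $1$, $\tau$ is Lipschitz in the edge indicators, whence (by \eqref{martingale}) $\tau(G)<\E\tau(G)+o(m)$ w.h.p.; and $\E\tau(G)\le\E_{G,\text{partition}}|F|\sim\psi(d)m$ is exactly your expectation computation. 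So your harder concentration argument is correct but unnecessary.
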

\nin
The proof of Theorem~\ref{MT} for fixed $d\geq 1/2$ is then completed by the following calculation.
\begin{lemma}\label{C} 
For any $d\geq 1/2$, $\psi(d)< 2\xi(d).$
\end{lemma}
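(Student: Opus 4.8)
The plan is to clear denominators. Since
\[
2\xi(d)-\psi(d)\;=\;\tfrac16\Bigl(1-\frac{4}{\sqrt{2d+1}}+3e^{-g(d)}\Bigr),\qquad g(d):=\tfrac d2\bigl(1+e^{-d}\bigr),
\]
the claim $\psi(d)<2\xi(d)$ is exactly $\Phi(d)>0$ for $d\ge\tfrac12$, where $\Phi(d):=1-4(2d+1)^{-1/2}+3e^{-g(d)}$. So the whole lemma becomes a one‑variable positivity statement, and I would first peel off the easy large‑$d$ range.

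If $d\ge 15/2$ then $(2d+1)^{-1/2}\le\tfrac14$, hence $2\xi(d)=\tfrac23\bigl(1-(2d+1)^{-1/2}\bigr)\ge\tfrac12>\psi(d)$ (equivalently $\Phi(d)\ge 3e^{-g(d)}>0$). This reduces everything to the compact interval $[\tfrac12,15/2]$, on which $\Phi$ is continuous and, as the argument will show, bounded away from $0$ from below.

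On $[\tfrac12,15/2]$ I would argue by partitioning, using only two monotonicities: $(2d+1)^{-1/2}$ is decreasing, and on a subinterval $[a,b]$ one has $g(d)\le\tfrac b2(1+e^{-a})$ (since $d\le b$ and $e^{-d}\le e^{-a}$ there), so $e^{-g(d)}\ge\exp\!\bigl(-\tfrac b2(1+e^{-a})\bigr)$. Hence on $[a,b]$
\[
\Phi(d)\ \ge\ 1-\frac{4}{\sqrt{2a+1}}+3\exp\!\Bigl(-\tfrac b2(1+e^{-a})\Bigr)\ =:\ m(a,b),
\]
and it suffices to exhibit a partition $\tfrac12=a_0<a_1<\dots<a_k=15/2$ with $m(a_i,a_{i+1})>0$ for every $i$ — a finite list of explicit numerical inequalities. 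Because $\Phi$ has a genuine positive minimum on the interval (numerically about $0.035$, attained near $d\approx 5$), such a partition exists; estimating how fast $m(a,\cdot)$ degrades shows a mesh of order $0.1$ near $d=\tfrac12$ and of order $0.3$ farther out works, so roughly two dozen steps do it. A shorter alternative is to analyse $\Phi'(d)=4(2d+1)^{-3/2}-3g'(d)e^{-g(d)}$ directly: one checks it has exactly two zeros in $(\tfrac12,\infty)$, a local maximum near $d\approx1.1$ and the unique interior minimum at some $d^{*}\in(5,\tfrac{11}{2})$ (here $\Phi'>0$ on $[\tfrac{11}{2},\infty)$ because $4(2d+1)^{-3/2}>\tfrac32e^{-d/2}\ge 3g'(d)e^{-g(d)}$ there, the left inequality following from $\tfrac83(2d+1)^{-3/2}e^{d/2}$ being increasing and $>1$ at $d=\tfrac{11}{2}$). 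Then $\min_{[\frac12,\infty)}\Phi=\min\{\Phi(\tfrac12),\Phi(d^{*})\}$, and since $\Phi(\tfrac12)\approx0.18$ it remains only to bound $\Phi(d^{*})$: using convexity of $\Phi$ on $[5,\tfrac{11}{2}]$ and its tangent at $d=5$, $\Phi(d^{*})>\Phi(5)+\tfrac12\Phi'(5)>0$ (with $\Phi(5)\approx0.036$, $\Phi'(5)\approx-0.008$).

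The one real obstacle is that the inequality is genuinely tight: near $d\approx 5$ one has $\tfrac{\psi(d)}{2\xi(d)}\approx 1-0.01$, i.e.\ only about a percent of slack. Consequently no single clean global bound on $g$ will close the argument — $g(d)\le d$ and $g(d)\le\tfrac d2+\tfrac1{2e}$ each overshoot by more than the slack somewhere in the relevant range — and checking merely the endpoints $d=\tfrac12,15/2$ is not enough; it is exactly the band around $d\approx 5$ that forces either a fairly fine partition or the little convexity argument above. Everything else, and all the estimates involved, is routine one‑variable calculus.
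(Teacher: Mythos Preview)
Your proposal is correct and follows essentially the same strategy as the paper: rewrite the inequality as $\Phi(d)>0$, dispose of large $d$ trivially, and verify the remaining compact interval by subdividing and bounding each piece numerically. The main difference is in execution: the paper factors $3e^{-g(d)}=g_0(d)h(d)$ with $g_0(d)=3e^{-d/2}$ and $h(d)=\exp(-\tfrac{d}{2}e^{-d})$, then exploits convexity of $4(2d+1)^{-1/2}$ and $g_0$ (using secant and tangent lines, respectively) together with monotonicity of $h$ to reduce each unit subinterval $[k,k+1]$ to checking a linear function at two endpoints --- so eight subintervals suffice, versus the roughly two dozen your cruder monotonicity bound $m(a,b)$ would need. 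Your critical-point alternative is also viable but, as you note, leaves a few claims (exactly two zeros of $\Phi'$, convexity on $[5,\tfrac{11}{2}]$) to be checked; the paper avoids any such analysis of $\Phi'$.
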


A verification of the elementary (but not easy) Lemma~\ref{C} is sketched
in Appendix~\ref{Sec.C}.
Note the lemma is trivial for large enough $d$.  
It is actually true for \emph{all} positive $d$, but its already annoying proof becomes even more annoying
for $d$ below $1/2$ and, not needing this, we skip it.
We provide (and possess) no insight suggesting that the lemma is more than a lucky coincidence.
(It is sometimes just barely true; see Figure~\ref{Figure1} in the Appendix.)
On the other hand, we haven't much reason
to think that $\nu$ isn't significantly larger than what we're able to show.
(We \emph{guess} Theorem~\ref{thm.tau}, though slightly improvable, is 
close to the truth.)

Finally, completing the picture, we observe that for larger $d$, both 
$\nu$ and $\tau$ behave as one would expect.
Here we recall that
\beq{naive}
\mbox{for any $H$, $~\nu(H)\leq |H|/3~$ and $~\tau(H)< |H|/2$.}
\enq
(The bound on $\nu$ is trivial and that on $\tau$ is the standard observation 
that on average, for a random equipartition $V(H) = X\cup Y$, more than half the edges
of $H$ have ends in both $X$ and $Y$.)
It turns out that as $d\ra \infty$, both these bounds are (w.h.p.) asymptotically tight for $G$.
For $\tau$ this is due to Frankl and R\"odl~ \cite{FR} (see also \cite[Theorem 8.14]{JLR}; 
here, of course, it is just context, not part of the proof of Theorem~\ref{MT}).
We will show:
\begin{theorem}\label{PP}
If $d\gg 1 $ then w.h.p.
$~
\nu(G)\sim m/3.
$
\end{theorem}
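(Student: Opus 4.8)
The upper bound needs nothing new: \eqref{naive} gives $\nu(G)\le|E(G)|/3$, and since $d\gg1$ forces $p\gg n^{-1/2}$ and hence $m\gg n^{3/2}\ra\infty$, Chernoff's inequality gives $|E(G)|\sim m$ w.h.p., so $\nu(G)\le(1+o(1))m/3$. For the matching lower bound $\nu(G)\ge(1-o(1))m/3$---equivalently, a triangle packing covering all but $o(m)$ of the edges of $G$---the plan is to apply the semi-random method to the $3$-uniform hypergraph $\h=\h(G)$ on vertex set $E(G)$ whose hyperedges are the edge-triples $\{uv,uw,vw\}$ of triangles $uvw$ of $G$. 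Since a matching of $\h$ is precisely a triangle packing of $G$, $\nu(G)=\nu(\h)$, and it is enough to find a matching of $\h$ covering $(1-o(1))|E(G)|$ of its vertices. I will invoke the near-perfect-matching form of the R\"odl nibble (Pippenger--Spencer): a $k$-uniform hypergraph on $N$ vertices with all degrees $(1\pm o(1))D$ ($D\ra\infty$) and all codegrees $o(D)$ has a matching covering $(1-o(1))N$ vertices; applied to $\h$ with $D=d$ and $N=|E(G)|\sim m$ this gives $\nu(G)\ge(1-o(1))m/3$.

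So what remains is to check the two hypotheses for $\h$. The codegree condition is automatic: if edges $e,f$ of $G$ lie in a common hyperedge then $\{e,f\}$ lies in the three-element edge set of a triangle, forcing $e$ and $f$ to share an endpoint---say $e=uv$, $f=uw$---whence $\{uv,uw,vw\}$ is the only candidate, and it is a hyperedge iff $vw\in E(G)$; so all codegrees of $\h$ are $\le1=o(d)$. For the degrees, $\deg_{\h}(uv)$ is the number of common neighbours of $u$ and $v$ in $G$, which is distributed as $\bin(n-2,p^2)$ with mean $d$. When $d\gg\log n$, Chernoff plus a union bound over the $\le\binom{n}{2}$ edges show that w.h.p.\ $\deg_{\h}(e)=(1\pm o(1))d$ for every $e$, and Pippenger--Spencer applies directly.

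For the remaining range $1\ll d\ll\log n$ the degrees of $\h$ need not concentrate---the maximum of the $\binom{n}{2}$ binomials is typically $\go(d)$---so I would first cap the maximum degree. Fix $\eps=\eps(n)\ra0$ slowly enough that $\eps^2 d\ra\infty$, and let $B$ be the set of edges with $\deg_{\h}>(1+\eps)d$. A Chernoff bound gives $\E|B|=o(m)$, and the analogous tail estimate for $\E\sum_{f\in B}\deg_{\h}(f)$ shows that $B$ meets only $o(md)$ triangles; so w.h.p.\ $|B|=o(m)$ and (since $B$ meets only $o(md)$ triangles) only $o(m)$ edges outside $B$ lose as many as $\eps d$ of their triangles to $B$. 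Deleting $B$ then leaves $\h'=\h[E(G)\sm B]$, in which all degrees are $\le(1+\eps)d$, all codegrees are $\le1$, and all but $o(m)$ of the $(1-o(1))m$ vertices have degree $\ge(1-2\eps)d$ (the exceptions being the $o(m)$ edges with $\deg_{\h}<(1-\eps)d$ or with more than $\eps d$ triangles meeting $B$). A defect version of the nibble---one tolerating $o(N)$ exceptional vertices and an $O(\eps)$ additive slack in the degrees---then produces a matching of $\h'$, hence of $\h$, covering $(1-o(1))m$ vertices, giving $\nu(G)\ge(1-o(1))m/3$; with the upper bound this yields $\nu(G)\sim m/3$. (For $d\gg\log n$ one has $B=\0$ w.h.p.\ and this collapses to the previous paragraph.)

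The one point I expect to need real care is this cleaning step: one must check that removing $B$ damages only an $o(m)$ further set of vertices---it does, since $B$ meets only $o(md)$ triangles, i.e.\ $o(d)$ on average per edge---and one must appeal to a version of the nibble robust to these few irregular vertices, because for $d=o(\log n)$ the bare Pippenger--Spencer statement does not apply verbatim. The rest---the concentration of $|E(G)|$, the codegree bound, and the Chernoff and tail estimates for the degrees of $\h$---is routine, and since Theorem~\ref{PP} is only context for Theorem~\ref{MT}, a streamlined treatment here is appropriate.
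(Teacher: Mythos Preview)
Your approach is correct and close in spirit to the paper's: both work with the $3$-uniform hypergraph $\h$ on $E(G)$ whose hyperedges are the triangles of $G$, both single out the ``heavy'' edges (those in more than $(1+\eps)d$ triangles) and show via Chernoff that these meet only $o(n^3p^3)$ triangles, and both finish with a Pippenger-type result. The difference is in the black box. You delete $B$ and appeal to a defect version of Pippenger--Spencer; this works, but as you note it requires care, and the step ``only $o(m)$ edges lose $\eps d$ triangles to $B$'' actually needs the triangle count through $B$ to be $o(\eps md)$, not just $o(md)$---easily arranged by letting $\eps$ decay slowly enough, but worth saying. The paper instead invokes the fractional-matching result of \cite{JK} (Theorem~\ref{TJK} here): setting $\vp(A)=1/D$ for triangles $A$ avoiding heavy edges and $\vp(A)=0$ otherwise gives a fractional matching with $\ga(\vp)\le 1/D\to 0$ and (w.h.p.) $\sum\vp(A)\sim m/3$, and Theorem~\ref{TJK} converts this directly to $\nu(\h)\ge(1-o(1))m/3$ with no cleaning, no iteration, and no defect hypothesis. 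So the paper's route is shorter and avoids the one step you flagged as delicate; yours is more hands-on and relies only on the more widely quoted Pippenger--Spencer statement.
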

\nin
This is an easy consequence of Pippenger's Theorem (or a slight variant thereof; see Section~\ref{SP}),
but despite some past interest (again, see Section~\ref{SP}), seems not to have
been pointed out previously.

\mn
\textbf{Outline.}
Section~\ref{Basics} gives definitions (mostly involving ``triangle-trees''),
proves a few simple results concerning these, and recalls a little standard machinery.
Section~\ref{Sec.BP} introduces the breadth-first triangle-trees $S^*(xy)$
and establishes couplings---Corollary~\ref{SSd} in particular---that underlie
Theorems~\ref{thm.smalld}-\ref{thm.tau}.
The latter are proved in Sections~\ref{PT1.2}-\ref{PTtau}
respectively, and, as noted above,
the proofs of Theorem~\ref{PP} and Lemma~\ref{C} are given
in Section~\ref{SP} and Appendix~\ref{Sec.C}.

\section{Basics}\label{Basics}

\subsection{Definitions}\label{Defs}
We use $H$ for a general graph, reserving $G$ for $G_{n,p}$ and $V$ for $V(G)$ ($=[n]$).
As is common, $H[X]$ is the subgraph of $H$ induced by $X$, $N(x)$ is the neighborhood of $x$
(in the graph under discussion), $N(x,y)=N(x)\cap N(y)$,
and $\nabla(A,B)$ is the set of edges joining disjoint sets of vertices $A,$ $B$.
We tend to think of 
graphs as edge sets, and in particular write $|H|$ for $|E(H)|$.

We use $\cR(H)$ for the set of triangles in $H$ and, for $\cA \subseteq \cR(H)$, 
$V(\cA)$ and $E(\cA)$ for the sets of vertices and edges in triangles of $\cA$.

A graph $T$ is a \textit{triangle-tree} if it can be gotten by starting with an edge $\rho$ (the \textit{root} of $T$) and repeatedly adding a triangle consisting of an already used edge and a not-previously-used vertex. A triangle-tree $T$ is a \textit{triangle-path} if each new triangle
uses an edge that was added in the preceding step. 

We will sometimes use simply ``tree'' and ``path'' for ``triangle-tree'' and ``triangle-path,''
since, with one tiny exception 
(in the proof of Proposition~\ref{small.d.Sd}), these are the only trees and paths we will see.

The \textit{length} of a (triangle-)path is its number of triangles. 
The \emph{distance}, $\dist(a,b)$ between
\emph{elements} $a,b$ of $H$ (meaning members of $V(H) \cup E(H) \cup \cR(H)$) 
is the length of a shortest path joining $a$ and $b$ (so $\dist(a,b)=\infty$ 
if there is no such a path, but we won't need this).
The \emph{depth} of an element of a tree is its distance to the root, and 
the depth of the tree itself is the largest of the depths of its elements.

For $A, B \in \cR(T)$ ($T$ a tree), we say $A$ is a \textit{child} of $B$ 
(and $B$ is \textit{the parent} of $A$) if $B$ is the triangle immediately preceding $A$
on the (unique) path from the root to $A$.
Similarly, the \emph{base} of an element $a$ of $T$ is the last \emph{edge}
preceding $a$ on the path joining the root to $a$
(so the edge that the first triangle containing $a$ shares
with the triangle that preceded it).

We say $H$ is \textit{triangle-connected} iff any two of its edges are connected
by a triangle-path.
In particular, a single edge is a triangle-path and triangle-connected. A 
\textit{triangle-component} of $H$ is a maximal triangle-connected subgraph of $H$, and
is \textit{trivial} if it is a single edge.
For $xy \in H$, we use $S(xy)$ for the triangle-component containing $xy$.

We may build any triangle-connected $S$ (say rooted at $e=xy$) by starting with $e$ and repeatedly 
adding triangles, each sharing at least an edge with what we already have.  Thus the number of 
added vertices (i.e.\ other than $x,y$) is at most half the number of added edges, with
\beq{eq}
\mbox{equality iff $S$ is a (triangle-)tree.}
\enq

We use $S^d$ for the Galton-Watson-like (random, possibly infinite) triangle-tree gotten by
starting with a root edge $\rho$
and letting each edge in turn give birth to a random number of 
triangles with distribution $\Po(d)$ (these choices made independently).
This ``ideal'' tree is susceptible to exact analysis, and the proofs of 
Theorems~\ref{thm.smalld}-\ref{thm.tau} will involve comparing it with
$S(xy)$.
We will ``interpolate'' between them using a breadth-first triangle tree, $S^*(xy)$, to
be defined in Section~\ref{Sec.BP}.  

Finally, we set
$S_\gc(xy)=\{e \in S(xy): \dist(xy,e)\le \gc\}$ and define $S^d_\gc$ analogously
(and similarly for $S^*_\gc(xy)$ when we get there).

\subsection{Small Claims}

\begin{proposition}\label{tree}
If $T$ is a finite triangle-tree, then $\tau(T)=\nu(T)$.
\end{proposition}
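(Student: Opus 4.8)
The plan is to prove $\tau(T) = \nu(T)$ for a finite triangle-tree $T$ by induction on the number of triangles (equivalently, on the depth or on $|E(T)|$), exploiting the recursive structure of triangle-trees. Since $\tau(T) \geq \nu(T)$ always holds trivially, it suffices to exhibit a cover $F$ of triangles by edges and a matching $M$ of edge-disjoint triangles with $|F| = |M|$; I will in fact build both simultaneously by the induction.

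Here is the recursion I would use. Take a triangle-tree $T$ with at least one triangle, rooted at $\rho$. Look at a triangle $A$ of maximum depth, with parent edge (``base'') $e$; by maximality, $A$ and all of its siblings that hang off $e$ are leaf-triangles, i.e.\ each of their two non-base edges lies in no other triangle of $T$. Let $A = e \cup \{f, g\}$ where $f,g$ are these pendant edges. First I would handle the degenerate possibility that $T$ is a single triangle-tree consisting only of triangles sitting on a common root edge or more generally reduce to the following clean step: pick any leaf-triangle $A$ with base $e$, put $e$ into the cover $F$ and $A$ into the matching $M$, and delete from $T$ \emph{all} triangles containing $e$ (there is at least one, namely $A$) together with their pendant vertices and pendant edges. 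Call the result $T'$. Then $T'$ is again a triangle-tree (or empty, or a single edge with no triangles): removing $e$ and everything ``below'' it, where ``below'' is safe precisely because those triangles were leaves, preserves the build-up-by-triangles structure. By induction $\tau(T') = \nu(T') =: k$, witnessed by a cover $F'$ and matching $M'$; and since every triangle we deleted contains $e$, the set $F' \cup \{e\}$ covers all triangles of $T$, while $M' \cup \{A\}$ is edge-disjoint (the triangles of $M'$ live in $T'$, which shares no edge with $A$). Hence $\tau(T) \leq k+1 = \nu(T') + 1 \leq \nu(T)$, and combined with $\tau(T) \geq \nu(T)$ we get equality. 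The base case $|E(T)| = 1$ (a single root edge, no triangles) gives $\tau = \nu = 0$.

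The one point that needs care — and is the main obstacle — is verifying that $T'$ really is a triangle-tree after deleting a base edge $e$ and everything ``below'' it. The subtlety is that $e$ itself was created as the pendant edge of some triangle $B$ earlier in the construction, so removing $e$ threatens to disconnect $T$. The resolution is to delete not just $e$'s subtree but to be slightly more careful about \emph{which} edge we remove: rather than a base edge, I would choose a deepest \emph{leaf-triangle} $A = e \cup\{f,g\}$ and simply delete the two pendant edges $f,g$ and the pendant vertex of $A$ (the vertex not on $e$), putting one of $f$ or $g$ — say $f$ — into $F$ and $A$ into $M$. Deleting a single leaf-triangle with its pendant vertex clearly leaves a triangle-tree $T'$ (we just undo the last ``add a triangle'' step, or one of several such steps at that base). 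Then $F' \cup \{f\}$ covers $T$: the only triangles of $T$ not in $T'$ are $A$ and possibly other triangles on $e$ — but wait, those other triangles on $e$ are still present in $T'$. This shows the naive single-triangle deletion is \emph{not} enough, because $f$ covers only $A$. So the correct move is indeed: delete \emph{all} triangles on $e$ at once. To see $T' = T - \{\text{triangles on } e\} - \{\text{their pendant vertices/edges}\}$ is a triangle-tree, note $e$ survives in $T'$ (it is not removed, only the triangles strictly below it are), and every surviving triangle of $T$ can still be reached from $\rho$ by a triangle-path avoiding the deleted region — because any triangle-path from $\rho$ to a surviving triangle that used a deleted triangle would have to pass \emph{through} $e$ and then into the deleted subtree and back, which a shortest path never does. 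Formalizing ``$e$ survives and the rest is a valid build sequence'' is the only real content; everything else is bookkeeping.

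I expect the cleanest writeup actually fixes the root once and for all, takes $A$ to be a triangle at maximum distance from $\rho$, lets $e$ be its base, and argues that \emph{every} triangle containing $e$ is at the same maximum distance and hence is a leaf; then the deletion described above is unambiguous and the induction closes. An alternative, perhaps even shorter, route avoiding the structural lemma entirely: show directly that in a triangle-tree every triangle-component is ``decomposable'' in the sense that there is an edge $e$ lying in a triangle but such that $T$ minus the triangles through $e$ is a disjoint union of smaller triangle-trees — but this is essentially the same argument. I would go with the induction as above; the key step to get right is the claim that discarding all triangles on a deepest base edge $e$ leaves a triangle-tree.
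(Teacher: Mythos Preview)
Your induction has a genuine gap in the matching step. In your final setup you keep the base edge $e$ in $T'$ (``note $e$ survives in $T'$''), so $T'$ still contains the \emph{parent} triangle $B$ of $A$ (the unique triangle of $T$ for which $e$ is a non-base edge). Hence your claim that ``$T'$ shares no edge with $A$'' is simply false: $e\in E(T')\cap E(A)$. Concretely, a maximum matching $M'$ of $T'$ may well contain $B$, and then $M'\cup\{A\}$ is not edge-disjoint. For a minimal example, take $T$ consisting of the root $\rho$, a single triangle $B$ on $\rho$ with non-base edges $e,k$, and a single leaf triangle $A$ on $e$. Then $T'$ is just $B$, so $\nu(T')=\tau(T')=1$, and the only choice of $M'$ is $\{B\}$; but $\{B,A\}$ is not a matching. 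Your argument would give $\nu(T)\geq 2$, which is false (in fact $\nu(T)=\tau(T)=1$).

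The fix is to remove $e$ as well. But then---exactly the point you were trying to avoid---the remainder is no longer a single triangle-tree: deleting $e$ destroys the triangle $B$ and separates the subtree rooted at $k$ from the rest of $T$. This is what the paper does: it removes $E(\cU)$ (including $e$), notes that $T\setminus E(\cU)$ is the edge-disjoint union of \emph{two} triangle-trees $T_1,T_2$ (one of them, rooted at $k$, of depth at most $1$), applies induction to each, and takes $\cM_1\cup\cM_2\cup\{A\}$ and $C_1\cup C_2\cup\{e\}$. The two-tree decomposition is not an obstacle to avoid but the actual content of the step; your attempt to keep $e$ so as to stay with a single tree is precisely what breaks the matching bound.
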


\begin{proof}
We proceed by induction on $|T|$. The statement is trivial if the depth,
say $t$, of $T$ is 0 or 1; so assume $t \ge 2$
and let $A$ be a triangle of depth $t$.
Let $e,f,g$ be the edges of $A$, with $e$ its base (so $A$ is the only triangle containing
either of $f,g$); let $B$ be the parent of $A$; and let $e,h,k$ be the edges of $B$,
with $h$ its base.

Let $\U$ be the set of triangles with base $e$ and notice that
$T\sm E(\U)$ is the edge-disjoint union of two triangle-trees,  $T_1$ and $T_2$
(one of them, rooted at $k$, of depth at most 1).
But then if $\m_i$ and $C_i$ are (resp.) a maximum matching and minimum cover of $T_i$,
induction gives $|\m_i|=|C_i|$ for each $i$, so
$\m:=\m_1\cup\m_2\cup \{A\}$ and $C:=C_1\cup C_2\cup \{e\}$ are a matching 
and cover of $T$ with $|\m|=|C|$, and the proposition follows.
\end{proof}

In what follows we will be interested in trees that are not too deep, say of depth $\gc=\gc(n)$ 
satisfying
\beq{gc}
\gc \ll \log n/\log \log n.
\enq
\begin{proposition}\label{small.tree}
For fixed d, $\gc$ as in \eqref{gc}, and distinct $x,y\in V$,
the probability that $xy\in G$ and
$S_\gamma(xy)$ is not a tree is less than $n^{-1+o(1)}$.
\end{proposition}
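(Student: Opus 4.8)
The plan is to show that a ``bad'' event---$xy\in G$ together with $S_\gamma(xy)$ containing a cycle---forces the appearance of a specific small, dense subgraph configuration, and then to union-bound over all such configurations. The key structural observation is \eqref{eq}: a triangle-connected graph $S$ rooted at $xy$ is a tree exactly when the number of non-root vertices it uses is half the number of non-root edges; so if $S_\gamma(xy)$ fails to be a tree, then in some breadth-first exploration of the triangle-component out to distance $\gamma$ we encounter a triangle whose ``new'' vertex is actually an old vertex, i.e.\ a triangle both of whose non-base edges are already present. Concretely, this means there is a triangle-connected subgraph $S\subseteq G$ containing $xy$, reachable within $\gamma$ steps, with $|V(S)|\le |E(S)|/2 + 2 - 1 = (|E(S)|+2)/2 - 1/2$, i.e.\ with at least one ``excess'' edge: $|E(S)| \ge 2(|V(S)|-2)+1$.

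First I would set up the enumeration. For a fixed number $k$ of edges, a triangle-connected graph built from $xy$ in the tree-like way has $2+ (k-1)/2$ vertices when it is a tree, but a non-tree witness $S$ with $k$ edges has at most $2 + (k-2)/2 = (k+2)/2$ vertices. The number of ways to choose the vertex set is at most $n^{(k+2)/2 - 2} = n^{(k-2)/2}$ (the two root vertices $x,y$ being fixed), and then the number of ways to place $k$ edges among them and the resulting probability contributes $p^k \le (Cd^{1/2}n^{-1/2})^k$ up to constants, since $p = \Theta(\sqrt{d/n})$ and $d$ is fixed. Multiplying, each such configuration of size $k$ contributes roughly $n^{(k-2)/2} n^{-k/2} = n^{-1}$ up to a factor that is polynomial in $k$ and exponential in $k$ with a constant depending on $d$. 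So the per-size contribution is $n^{-1}$ times $C(d)^k k^{O(1)}$, and since a depth-$\gamma$ triangle-tree has $O(2^\gamma)$ or so triangles---more precisely, the relevant $k$ is bounded by something like $3\cdot 2^{O(\gamma)}$, hence by \eqref{gc} satisfies $k \le n^{o(1)}$---summing over the allowed range of $k$ costs only an $n^{o(1)}$ factor. This yields the bound $n^{-1+o(1)}$.

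The step I expect to be the main obstacle is the bookkeeping in the enumeration: getting an honest bound on the number of triangle-connected graphs with $k$ edges, rooted at a prescribed edge, that have the claimed excess, \emph{and} controlling the maximum relevant $k$ in terms of $\gamma$. For the first part one wants to argue that such a graph can be ``grown'' by a sequence of at most $k$ steps, each adding a triangle on an existing edge, so it is coded by a bounded-degree choice at each step (which existing edge, plus which new or old vertex), giving at most $k!\cdot(\text{poly})$ or a cleaner $C^k$ bound after accounting for the tree-like skeleton---but one must be careful that the excess edges don't blow this up, which they don't since there is only a bounded number of them needed (in fact just one suffices to certify non-treeness, so we may take $S$ minimal with $|E(S)| = 2(|V(S)|-2)+1$). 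For the second part, a tree of depth $\gamma$ in which every edge spawns a bounded number of triangles has at most exponentially-in-$\gamma$ many edges, and \eqref{gc} is precisely calibrated so that $C^\gamma = n^{o(1)}$; here one has to note that, \emph{conditioned on} reaching the bad event, the component out to distance $\gamma$ need not be small, so one instead takes $S$ to be a \emph{minimal} witnessing subgraph, which being nearly a tree of depth $\le \gamma$ does have $n^{o(1)}$ edges, and it is this $S$ that we enumerate. Once the witness is taken minimal, everything else is a routine first-moment computation.
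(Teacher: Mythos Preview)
Your strategy---find a small ``one-excess'' witness and union-bound---is exactly the paper's, but there is a real gap in your size control for the witness.  You bound the number of edges of the minimal witness by ``something like $3\cdot 2^{O(\gamma)}$,'' observe this is $n^{o(1)}$, and conclude that summing $C(d)^k k^{O(1)}$ over $k$ in this range costs only $n^{o(1)}$.  That last step is false: with $k$ as large as $2^{O(\gamma)}=n^{o(1)}$, the factor $C(d)^k$ is of order $\exp(n^{o(1)})$, which is \emph{not} $n^{o(1)}$.  So the computation as you have it does not close.

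What is missing is the structural identification of the minimal witness.  The paper does this explicitly: let $t\le\gamma$ be the first depth at which $S_t(xy)$ fails to be a tree; then there is a vertex $v$ reachable by two distinct triangle-paths $P_1,P_2$ from $xy$, each of length at most $t$, and the witness is $P_1\cup P_2$.  This has $i:=|V(P_1\cup P_2)\setminus\{x,y\}|\le 2\gamma-1$ non-root vertices and (by \eqref{eq}) at least $2i+1$ non-root edges.  The point is that the witness is \emph{linear} in $\gamma$, not exponential.  With $i\le 2\gamma$ the overhead in the first-moment sum is $\gamma^{O(\gamma)}$, and it is precisely the condition \eqref{gc} that makes $\gamma^{O(\gamma)}=n^{o(1)}$.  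Once you replace your $2^{O(\gamma)}$ bound by $O(\gamma)$ via the two-paths observation, the rest of your outline (choose $i$ vertices, at least $2i+1$ edges among them, multiply by $p^{2i+1}$) is exactly the paper's calculation.
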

\begin{proof}
We first observe that, assuming $xy\in G$, if $S:=S_\gc(xy)$ is not a tree, then 
it contains, for some $i<2\gc$, a subgraph $T$ with $xy\in T$, 
$|V(T)\sm \{x,y\}| =i$ and $|E(T)\sm \{xy\}|\geq 2i+1$.
For if
\[
t=\min\{\ga: S_{\ga}(xy) \mbox{ is not a triangle-tree}\} ~(\leq \gc),
\]
then $S_{t}(xy)$ contains, for some vertex $v$, distinct triangle-paths $P_1$ and $P_2$ 
of length at most $t$ from $xy$ to $v$, and we have
$i:=|V(P_1\cup P_2)\sm \{x,y\}|\leq 2t-1$ and $|E(P_1\cup P_2)\sm \{xy\}|\geq 2i+1$
(since $P_1\cup P_2$ is triangle-connected and 
not a tree; see \eqref{eq}).

But the probability that $G$ contains such a $T$ is less than
\[
p\sum_{i\le 2\gamma-1} {n \choose i} {{i+2 \choose 2} \choose 2i+1} p^{2i+1}
=(O(\gc))^\gc n^{-1}
\]
(with the initial $p$ for $xy\in G$), which is $n^{-1+o(1)}$ for $\gc$ as in \eqref{gc}.
\end{proof}

\begin{proposition}\label{small.d.Sd} 
For the random triangle-tree $S^d$:
\begin{enumerate}
\item [(a)]
$S^d$ is finite with probability $1$ iff $d \le 1/2$;
\item [(b)]
the expected number of triangles of depth $i$ in $S^d$ is $(2d)^i/2$.
\end{enumerate}
\end{proposition}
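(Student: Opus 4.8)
The plan is to analyze $S^d$ via its associated \emph{edge process}. Give the root $\rho$ level $0$, and, whenever a triangle $A$ is born from an edge $e$ of level $\ell$, assign level $\ell+1$ to the two new edges of $A$ and (consistently with the definition of $\dist$) depth $\ell+1$ to $A$ itself; a short check against the definitions shows that a triangle has depth $i$ exactly when its base is a level-$(i-1)$ edge. Writing $N_j$ for the number of level-$j$ edges, the sequence $(N_j)_{j\ge 0}$ is precisely a Galton--Watson process: $N_0=1$, and each level-$j$ edge independently spawns $X\sim\Po(d)$ triangles, hence $2X$ new edges (which populate level $j+1$), the various $X$'s independent. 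So the offspring law is that of $2X$, with mean $\E[2X]=2d$; note it is not a.s.\ equal to $1$ (it is always even, and in any case $\pr(2X=0)=e^{-d}>0$).

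For (b), condition on the tree grown through level $i-1$: each of the $N_{i-1}$ level-$(i-1)$ edges spawns an independent $\Po(d)$ number of depth-$i$ triangles, so if $T_i$ is the number of depth-$i$ triangles then $\E[T_i\mid N_{i-1}]=d\,N_{i-1}$, whence $\E[T_i]=d\,\E[N_{i-1}]$. Since $N_0=1$ and $\E[N_j\mid N_{j-1}]=2d\,N_{j-1}$, we get $\E[N_j]=(2d)^j$, and therefore $\E[T_i]=d(2d)^{i-1}=(2d)^i/2$ for $i\ge 1$. (There are no depth-$0$ triangles; I read the statement as referring to $i\ge 1$.)

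For (a), observe that $S^d$ has $1+2k$ edges, where $k$ is its number of triangles, and that its edge set is partitioned by level, so that $S^d$ is finite iff the total progeny $\sum_j N_j$ of the edge process is finite; for a Galton--Watson process this holds a.s.\ iff the process goes extinct a.s. The standard extinction criterion says a Galton--Watson process with offspring mean $\mu$ dies out a.s.\ iff $\mu\le 1$, the sole exception being an offspring law putting all mass on $1$ --- which, by the previous paragraph, is not our situation. Hence the edge process is a.s.\ finite iff $2d\le 1$, i.e.\ iff $d\le 1/2$. There is no serious obstacle in any of this; the only points demanding a little care are the translation ``$S^d$ finite $\Longleftrightarrow$ edge process extinct'' and the boundary case $d=1/2$ (where $\mu=1$), for which one must verify that the Poisson-derived offspring law is not the degenerate ``exactly one child'' distribution --- both immediate once the edge process has been identified.
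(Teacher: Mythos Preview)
Your proof is correct and follows essentially the same approach as the paper: both reduce to the Galton--Watson process on edges of $S^d$ with offspring law $2\Po(d)$ (the paper calls the associated tree $U$, with $V(U)=E(S^d)$ and the base relation as the parent relation), and both invoke the standard extinction criterion and the formula $\E N_j=(2d)^j$ for the expected generation sizes. Your remark that the statement should be read for $i\ge 1$ is consistent with how the paper uses it.
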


\begin{proof}
These are basic properties of a Galton-Watson (GW) process (e.g.\ \cite[Section 5.1]{LP}).
We may associate with $S^d$ the (ordinary) random tree
$U$ with $V(U)=E(S^d)$ and $f$ a child of $e$ in $U$ iff, in $S^d$,
$e$ is the base of 
$f$.
Then $U$ is a GW tree with 
the number of children of each $e$ 
distributed as $L=2 \Po(d)$.
The assertions (a) and (b) are then given by Propositions~5.4 and 5.5 of \cite{LP},
which say (resp.) that $U$ is finite with probability 1 iff $\E L\leq 1$
(unless $L\equiv 1$), and that the expected number of vertices at depth $i$ in $U$ is $(2d)^i$.
\end{proof}

\subsection{Concentration}
 
We need two standard concentration facts (for the first see e.g.\ \cite[Theorem 2.1]{JLR}).
\begin{theorem}
\label{T2.1}
If $\xi $ is binomial with $\mathbb{E} \xi  = \mu $, then for $t \geq 0$,
\begin{align}
\Pr(\xi  \geq \mu + t) &\leq
\exp\left[-t^2/(2(\mu+t/3))\right], \nonumber\\
\Pr(\xi  \leq \mu - t) &\leq \exp[-t^2/(2\mu)].\nonumber
\end{align}
\end{theorem}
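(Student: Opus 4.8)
The plan is the standard exponential-moment (Bernstein--Chernoff) argument. Write $\xi=\sum_{i=1}^{N}X_i$ with the $X_i$ independent Bernoulli's (all with the same mean in the genuinely binomial case, though this plays no role), so that for every real $\lambda$ the moment generating function factorizes and, using $1+a\le e^{a}$ termwise,
\[
\E e^{\lambda\xi}=\prod_i\left(1-p_i+p_ie^{\lambda}\right)\le \exp\!\left(\textstyle\sum_i p_i(e^{\lambda}-1)\right)=\exp\!\left(\mu(e^{\lambda}-1)\right).
\]

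For the upper tail I would apply Markov's inequality to $e^{\lambda\xi}$ with $\lambda>0$, getting $\Pr(\xi\ge\mu+t)\le e^{-\lambda(\mu+t)}\,\E e^{\lambda\xi}\le\exp(-\lambda(\mu+t)+\mu(e^{\lambda}-1))$, and then optimize: the choice $\lambda=\log(1+t/\mu)$ gives $\Pr(\xi\ge\mu+t)\le\exp(-\mu\, h(t/\mu))$, where $h(x)=(1+x)\log(1+x)-x$. The lower tail is handled symmetrically, running the same computation with $\lambda<0$ (equivalently, applying the above to $-\xi$); this yields, for $0\le t\le\mu$, $\Pr(\xi\le\mu-t)\le\exp(-\mu\,\widetilde h(t/\mu))$ with $\widetilde h(x)=(1-x)\log(1-x)+x$, and the claimed bound is vacuous when $t>\mu$.

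What remains is to insert two elementary inequalities. The lower-tail one, $\widetilde h(x)\ge x^{2}/2$ on $[0,1]$, is immediate from the expansion $\widetilde h(x)=\sum_{k\ge2}x^{k}/(k(k-1))$; substituting $x=t/\mu$ and multiplying by $\mu$ gives the second displayed bound. The upper-tail one, $h(x)\ge \frac{x^{2}}{2(1+x/3)}$ for all $x\ge0$, is the only mildly annoying point: writing $g(x):=2(1+x/3)h(x)-x^{2}$ one checks $g(0)=g'(0)=0$ and $g''(x)\ge0$ for $x\ge0$, whence $g\ge0$ on $[0,\infty)$; then substituting $x=t/\mu$ and multiplying by $\mu$ gives the first bound, using $\mu(1+x/3)=\mu+t/3$. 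This calculus verification is the main obstacle, but it is entirely routine. (Alternatively, since the statement is exactly \cite[Theorem 2.1]{JLR}, one may simply cite it, which is what the paper does.)
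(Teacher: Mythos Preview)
Your argument is correct and is exactly the standard Bernstein--Chernoff derivation; the only minor quibble is that $g''(0)=0$, so to see $g''\ge 0$ on $[0,\infty)$ one really checks $g'''(x)=\tfrac{4}{3}\,x/(1+x)^2\ge 0$ first, but this is indeed routine.

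As you yourself note, the paper does not prove this statement at all: it simply quotes it as \cite[Theorem~2.1]{JLR} and moves on. So your proposal supplies strictly more than the paper does here, and there is nothing further to compare.
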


The second fact is ``McDiarmid's Inequality.''
(It is also called, for example, the Hoeffding-Azuma Inequality.  It is not the best one can say in
the situations below, but is enough for our purposes.)

\begin{theorem}[\cite{Mc}, Lemma 1.2] 
Let $X_1, \ldots, X_l$ be independent random variables, with $X_k\in A_k$ for each $k$. Suppose the (measurable) function $f: \prod A_k \rightarrow \mathbb R$ satisfies, for each k,
\beq{Mc.cond}
|f(X)-f(X')|\le c_k
\enq
whenever $X=(X_i:i\in [l])$ and $X'=(X'_i:i\in [l])$ differ only in their $k$th coordinates. 

Then for any $t>0$,
\[
\mbox{$\pr(|f-\E f| \ge t) \le 2 \exp[-2t^2/\sum c_k^2].$}
\]
\end{theorem}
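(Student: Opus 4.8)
\medskip\noindent
\textbf{Proof proposal.} The plan is the standard martingale (``Doob'') proof of a bounded-differences inequality: attach to $f$ a martingale with bounded increments, control each increment via Hoeffding's lemma, and finish with the usual Chernoff optimization.

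First I would form the Doob martingale of $f$ relative to the coordinates: set $Z_0=\E f$ and $Z_k=\E[f(X_1,\dots,X_l)\mid X_1,\dots,X_k]$ for $1\le k\le l$, so that $Z_l=f$ and, with $D_k:=Z_k-Z_{k-1}$, one has $f-\E f=\sum_{k=1}^{l} D_k$, a sum of martingale differences for the filtration generated by the $X_k$.

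The crux is to show that, conditioned on $X_1,\dots,X_{k-1}$, the increment $D_k$ lies in an interval of length at most $c_k$ whose left endpoint is measurable with respect to $X_1,\dots,X_{k-1}$. Writing $g_k(x_1,\dots,x_k)=\E[f\mid X_1=x_1,\dots,X_k=x_k]$ (an average over the still-independent $X_{k+1},\dots,X_l$), hypothesis \eqref{Mc.cond}, applied by coupling two copies of $(X_{k+1},\dots,X_l)$, gives $|g_k(x_1,\dots,x_{k-1},x_k)-g_k(x_1,\dots,x_{k-1},x_k')|\le c_k$ for all arguments; hence for fixed $x_1,\dots,x_{k-1}$ the function $x_k\mapsto g_k(x_1,\dots,x_k)$ has range of length $\le c_k$, and since $D_k=g_k(X_1,\dots,X_k)-\E[g_k(X_1,\dots,X_k)\mid X_1,\dots,X_{k-1}]$ the claim follows. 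Now I would apply Hoeffding's lemma (a mean-zero random variable valued in an interval of length $\ell$ has moment generating function at most $e^{\lambda^2\ell^2/8}$) inside the conditional expectation to obtain $\E[e^{\lambda D_k}\mid X_1,\dots,X_{k-1}]\le e^{\lambda^2 c_k^2/8}$ for every real $\lambda$, and then peel conditional expectations off one at a time (innermost first, via the tower property) to get
\[
\E\,e^{\lambda(f-\E f)}=\E\prod_{k=1}^{l}e^{\lambda D_k}\le\exp\Big[\tfrac{\lambda^2}{8}\sum_k c_k^2\Big].
\]

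Finally the Chernoff step: $\pr(f-\E f\ge t)\le e^{-\lambda t}\,\E e^{\lambda(f-\E f)}\le\exp\big[-\lambda t+\tfrac{\lambda^2}{8}\sum_k c_k^2\big]$, optimized at $\lambda=4t/\sum_k c_k^2$ to give $\exp[-2t^2/\sum_k c_k^2]$; running the same argument with $-f$ in place of $f$ bounds the lower tail identically, and a union bound over the two events contributes the factor $2$. The main obstacle is the conditional-boundedness step above, and in particular checking that the left endpoint of the conditional range of $D_k$ is genuinely $\sigma(X_1,\dots,X_{k-1})$-measurable so Hoeffding's lemma may be invoked conditionally; granting Hoeffding's lemma as a black box, everything else is routine bookkeeping.
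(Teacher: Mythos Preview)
Your argument is the standard Doob-martingale proof of McDiarmid's inequality and is correct. Note, however, that the paper does not prove this statement at all: it is quoted as a black box from \cite{Mc} (Lemma~1.2) and used only via the consequence \eqref{martingale}, so there is no ``paper's own proof'' to compare against.
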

\nin
We will always use this with $l=\C{n}{2}$ and
$X_i=\one_{\{e_i\in G\}}$, where $E(K_n)= \{e_i:i\in [l]\}$ (so $X=G$), in which case 
we have
\beq{martingale} 
\mbox{ if $f$ is Lipschitz (i.e.\ satisfies \eqref{Mc.cond} with $c_k=1$ $\forall k$) and 
$\E f\gg n$, then $f\sim \mathbb E f$ w.h.p.} 
\enq

\subsection{Number of subgraphs} 

For a graph $H$, let $\rho(H)=|E(H)|/|V(H)|$ (the \textit{density} of $H$),
and say $H$ is \textit{balanced} if every $H' \subseteq H$ has $\rho(H') \le \rho(H)$.

\begin{theorem}[\cite{AS}, Theorem 4.4.4] \label{2nd.m.m}
Let $H$ be balanced with $v$ vertices, $e$ edges and $a$ automorphisms, and 
let $X$ be the number of copies of $H$ in $G_{n,p}$. If $p \gg n^{-v/e}$ then w.h.p.
\[X \sim n^vp^e/a.\]
\end{theorem}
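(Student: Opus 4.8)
The plan is the classical second moment method. Write $X=\sum_\alpha \one_{A_\alpha}$, where $\alpha$ ranges over the copies of $H$ in $K_n$ (labeled, not necessarily induced) and $A_\alpha$ is the event that all $e$ edges of $\alpha$ lie in $G$. First I would record $\E X$: the number of copies of $H$ in $K_n$ is $\C{n}{v}v!/a=(1+o(1))n^v/a$ (as $v$ is fixed), each present with probability $p^e$, so $\E X=(1+o(1))n^vp^e/a$; in particular $\E X\to\infty$ since $p\gg n^{-v/e}$.

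Next, I would bound the variance via $\mathrm{Var}(X)=\sum_{\alpha,\beta}\left[\pr(A_\alpha\cap A_\beta)-\pr(A_\alpha)\pr(A_\beta)\right]$, noting that the summand vanishes whenever $\alpha$ and $\beta$ share no edge (the edges of $G_{n,p}$ being independent), so it suffices to control $\sum\pr(A_\alpha\cap A_\beta)$ over pairs sharing at least one edge. I would sort such pairs by the isomorphism type of the overlap $F=\alpha\cap\beta$, a subgraph of $H$ with, say, $i$ vertices and $j\ge 1$ edges: for fixed $F$ one has $\pr(A_\alpha\cap A_\beta)=p^{2e-j}$, and the number of ordered pairs with overlap type $F$ is $O(n^{2v-i})$ (choose $\alpha$ in $O(n^v)$ ways, then the $v-i$ vertices of $\beta$ outside $\alpha$ in $O(n^{v-i})$ ways, the bounded number of placements of $F$ and the automorphism counts being absorbed into the constant). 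Hence
\[
\frac{\mathrm{Var}(X)}{(\E X)^2}\le\sum_{F}\Theta\!\left(n^{-i(F)}p^{-j(F)}\right),
\]
the sum over the $O(1)$ isomorphism types $F\subseteq H$ with $j(F)\ge 1$. Here I would invoke that $H$ is balanced: $\rho(F)\le\rho(H)$ gives $i\ge jv/e$, so $n^{-i}p^{-j}\le\left(n^{-v/e}p^{-1}\right)^{j}=o(1)$ for each such $F$ (using $p\gg n^{-v/e}$ and $j\ge 1$). Therefore $\mathrm{Var}(X)=o((\E X)^2)$.

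Finally, Chebyshev's inequality yields $\pr(|X-\E X|>\eps\E X)\le\mathrm{Var}(X)/(\eps\E X)^2\to 0$ for every fixed $\eps>0$, so $X\sim\E X\sim n^vp^e/a$ w.h.p. The one place calling for care is the variance sum: parametrizing pairs of copies by their (possibly disconnected, not necessarily induced) intersection $F$ and checking that the associated counting constant is genuinely $O(1)$, after which the balanced hypothesis kills each term cleanly; the rest is routine.
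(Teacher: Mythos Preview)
Your argument is correct and is exactly the standard second moment method proof; the paper does not prove this statement at all but simply quotes it from \cite{AS}, where the same argument appears. One small remark: in your variance estimate you should note that the subgraph $F$ has at least two vertices (since it has at least one edge), so the bound $i\ge jv/e$ is being applied with $i\ge 2$, which is fine.
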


\subsection{Binomial v.\ Poisson}
For our limited purposes
we use simply $\|X-Y\|$ for the \textit{total variation distance} between discrete random variables $X$ and $Y$;
this is (by definition) half the $\l_1$ distance between their distributions, 
and is the minimum of $\pr(X\neq Y)$ under couplings of $X$ and $Y$.

\begin{proposition}\label{BPC}
For $n$ and $c\geq -n$ integers, $p\in [0,1]$, 
$X \sim \bin(n+c,p)$ and $Y \sim \Po(np)$,
\[\|X-Y\| \leq |c|p+O(p).\]
\end{proposition}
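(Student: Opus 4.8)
\textbf{Proof proposal for Proposition~\ref{BPC}.}

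The plan is to bound $\|X-Y\|$ via the triangle inequality, routing through the Poisson distribution with the matching mean. Write $X\sim\bin(n+c,p)$, $Y\sim\Po(np)$, and introduce the auxiliary variable $Z\sim\Po((n+c)p)$. Then
\beq{split}
\|X-Y\|\le \|X-Z\|+\|Z-Y\|.
\enq
For the first term, the classical Poisson approximation (Le Cam's inequality) gives $\|\bin(N,p)-\Po(Np)\|\le Np^2$ for any $N$; applied with $N=n+c$ this yields $\|X-Z\|\le (n+c)p^2\le np^2+|c|p^2 = np^2+O(p)$ (using $p\le 1$), and since $n p^2=(np)\cdot p$, we get $\|X-Z\|=O(p)$ \emph{provided} $np$ is bounded --- but of course $np$ need not be bounded, so this crude form is not quite enough and I will need to be a little more careful (see below).

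For the second term, $\|Z-Y\|$ is the total variation distance between two Poisson variables whose means differ by $|c|p$. The standard fact here is that $\|\Po(\lambda)-\Po(\mu)\|\le |\lambda-\mu|$ (easily seen, e.g., by coupling: write the one with the larger mean as the smaller one plus an independent $\Po(|\lambda-\mu|)$ summand, which differs from the smaller one only on the event that the extra summand is nonzero, an event of probability $1-e^{-|\lambda-\mu|}\le|\lambda-\mu|$). This gives $\|Z-Y\|\le |c|p$ directly, which is exactly the main term in the claimed bound.

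The one genuine wrinkle is the first term when $np$ is large: the bound $\|X-Z\|\le(n+c)p^2$ is only useful when $(n+c)p^2=O(p)$, i.e.\ when $np=O(1)$. When $np\to\infty$ we instead use the sharper total-variation bound for the binomial-Poisson comparison, namely $\|\bin(N,p)-\Po(Np)\|\le p\wedge Np^2$ (or, even more simply, the trivial bound $\|\cdot\|\le 2p$ obtained by coupling $\bin(N,p)$ with $\bin(N,p)$ having one fewer/more trial --- but the cleanest is to cite the known $\|\bin(N,p)-\Po(Np)\|\le p$, valid for all $N,p$, which follows from the maximal coupling of a single Bernoulli$(p)$ with Poisson$(p)$ having discrepancy $p-p e^{-p}+\ldots\le p$ per coordinate and subadditivity of TV under independent sums --- actually per-coordinate discrepancy is $\tfrac12(|p-pe^{-p}|+|(1-p)-(e^{-p}-pe^{-p})|+\ldots)$, which one checks is $\le p^2$, recovering Le Cam, or $\le p$ trivially). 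So: for $np\le 1$ use Le Cam to get $\|X-Z\|\le(n+c)p^2\le p+|c|p$; for $np>1$ use $\|X-Z\|\le p$. Either way $\|X-Z\|\le p+|c|p = O(p)+|c|p^2=O(p)+|c|p\cdot p$; combining with $\|Z-Y\|\le|c|p$ through \eqref{split} gives $\|X-Y\|\le|c|p+O(p)$ as claimed.

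The main obstacle, such as it is, is just this case split forced by the fact that $np$ is unbounded while we want an error term of size only $O(p)$; once one commits to the decomposition $X\to Z\to Y$ and has the right off-the-shelf inequalities ($\|\Po(\lambda)-\Po(\mu)\|\le|\lambda-\mu|$ and $\|\bin(N,p)-\Po(Np)\|\le\min\{p,Np^2\}$), the argument is routine and the stated bound drops out. Note the hypothesis $c\ge -n$ is needed only so that $n+c\ge 0$ and $\bin(n+c,p)$ makes sense.
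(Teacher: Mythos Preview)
Your approach is correct and close in spirit to the paper's, but the decomposition is different. Both argue via the triangle inequality through an intermediate $Z$, but the paper takes $Z\sim\bin(n,p)$ rather than your $Z\sim\Po((n+c)p)$. In the paper's version, $\|Z-Y\|=O(p)$ is the binomial-to-Poisson bound (cited from Vervaat; essentially the same $\|\bin(N,p)-\Po(Np)\|\le p$ you invoke), and $\|X-Z\|\le|c|p$ comes from the obvious coupling of $\bin(n,p)$ with $\bin(n+c,p)$ via shared Bernoulli trials. Your version swaps the roles: the Poisson--Poisson coupling supplies the $|c|p$, and the binomial--Poisson step supplies the $O(p)$. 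Either route works and neither is materially more elementary; the paper's has the minor advantage that the binomial--binomial coupling is immediate and needs no discussion of the size of $np$.

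One genuine wrinkle in your writeup: the case split on $np\le1$ versus $np>1$ is unnecessary, and the chain ``$p+|c|p=O(p)+|c|p^2=\ldots$'' is garbled (and the Le~Cam branch alone would \emph{not} yield $|c|p+O(p)$ when $|c|$ is large, since $(n+c)p^2$ can exceed $|c|p$). You already quote the uniform bound $\|\bin(N,p)-\Po(Np)\|\le\min\{p,Np^2\}\le p$; just apply it with $N=n+c$ to get $\|X-Z\|\le p$ in all cases, and combine with $\|Z-Y\|\le|c|p$ to finish in one line.
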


\begin{proof}
Let $Z \sim \bin(n,p)$. Since $\|X-Y\|  \le \|X-Z\|  + \|Z-Y\| $, and 
$\|Z-Y\| =O(p)$ (see (1.5) of~\cite{Vervaat} for a precise statement), 
it is enough to show $\|X-Z\| \leq|c|p$. To see this, we may couple $X$ and $Z$ 
by setting $l=\max\{n,n+c\}$, 
letting $\xi_1\dots \xi_l$ be independent with $\xi_i\sim \textrm{Ber}(p)$ ($\forall i$),
and setting $Z=\sum_{i\leq n} \xi_i$ and $X=\sum_{i\leq n+c}\xi_i$, 
yielding $\pr(X\neq Z) \leq |c|p$.\end{proof}

\section{Breadth first and branching}\label{Sec.BP}

We use $d(x)$ for the degree of $x$ (in $G$) and
$xyz$ for the triangle with vertices $x,y,z$, 
and assume in this section
that $d$ ($=(n-2)p^2$) $=\Theta(1)$.

As mentioned earlier,
the proofs of Theorems~\ref{thm.smalld}-\ref{thm.tau} depend on linking $S(xy)$ with 
the ideal triangle tree $S^d$ defined at the end of Section~\ref{Defs}, 
a connection based on comparing each of these with
the \emph{breadth-first} triangle-tree rooted at $xy\in G$.
This is the $xy$-rooted triangle-tree gotten by
\emph{processing edges in the order in which they enter the tree,}
where \emph{processing} $uv$ means adding all triangles $uvw$ with $w$ a vertex not yet in the tree;
more formally:

Fix an order $\prec$ on $V$, set 
$P_0=\{x,y\}$, and let $\R_0$ be the set of triangles on $xy$, $V_0=V(\R_0)$ 
and $Q_0= V_0\sm P_0$ ($ = N(x,y)$).
We process vertices $v_1,\ldots$ 
(this processing defined below), producing a
sequence 
$(\R_i,P_i,Q_i, X_i)$.  Each $\R_i$ will be the set of triangles of a tree,
with $V_i:=V(\R_i) =P_i  \amalg  Q_i$ and 
$X_i:=V\sm V_i$. When we finish 
processing $v_i$, vertices of
$P_i$ have been processed and vertices of $Q_i$ are ``in the queue'' (in the tree and 
waiting to be processed).
Of course we stop when the queue is empty, producing $S^*(xy)$.

We process vertices in the order in which they enter the evolving $V_i$, breaking ties according to $\prec$.
A key property that will hold throughout the evolution 
(which basically says we are building a tree) is
\beq{unique}
\mbox{each $v\in Q_i$ lies in a unique triangle, $T$, of $\R_i$, and $V(T)\sm \{v\}\sub P_i$.}
\enq

\emph{Processing} $v=v_i$ ($\in Q_{i-1}$) means:  with $ab$ the base of $v$ we form $\R_i$
by adding to $\R_{i-1}$ all triangles $avw$ and $bvw$ with 
\beq{w}
w\in X_{i-1}.
\enq
So we may---this will be natural below---also think of this as processing the \emph{edges} 
$av$ and $bv$, and each $w$ as above enters $Q_i$ with base one of $av$, $bv$.
Note there is no ambiguity here:
\[
N(a,b)\cap X_{i-1} = \0,
\]
since any $u\in N(a,b)$ not already in the tree when $ab$ was processed would have
been added to the tree at that time.
We then (in addition to $\R_i$) update $P_i=P_{i-1}\cup \{v_i\}$
and define $V_i$, $Q_i$, $X_i$ as above
(so $Q_i$ is $Q_{i-1}\sm\{v_i\}$ plus the $w$'s added at \eqref{w}).

Notice that this supports \eqref{unique}, which is true when $w$ enters the tree, and remains so
until $w$ is processed and removed from $Q_i$ (since none of the intervening steps
involves edges at $w$).

\begin{proposition}\label{S*S}
For any $\gc$, if
$S_{\gc}(xy)$ is a triangle-tree, then $S_\gc^*(xy)=S_\gc(xy)$.
\end{proposition}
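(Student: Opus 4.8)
\textbf{Proof plan for Proposition~\ref{S*S}.}
The plan is to show that, under the hypothesis that $S_\gc(xy)$ is a triangle-tree, the breadth-first construction never makes a ``wrong move''---i.e.\ at every step the triangles it adds within depth $\gc$ are exactly the triangles of $S_\gc(xy)$ incident to the edge being processed, and conversely every triangle of $S_\gc(xy)$ eventually gets added. Since both $S_\gc(xy)$ and $S^*_\gc(xy)$ are, by definition, the depth-$\le\gc$ truncations of $S(xy)$ and $S^*(xy)$ respectively, and $S^*(xy)\subseteq S(xy)$ trivially (every triangle added in the breadth-first process is a genuine triangle of $G$ sharing an edge with what we already have, hence lies in the triangle-component of $xy$), it suffices to prove the reverse containment $S_\gc(xy)\subseteq S^*_\gc(xy)$, and this is where the tree hypothesis does the work.

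First I would set up an induction on depth $j\le\gc$, with the inductive claim: for every edge $e\in S_\gc(xy)$ with $\dist(xy,e)<j$, the edge $e$ is processed at some stage of the breadth-first construction, and at the moment just after it is processed the set of triangles of $\R_i$ containing $e$ equals the set of \emph{all} triangles of $S(xy)$ containing $e$ (equivalently, of $G$ containing $e$ and a vertex within the component). The base case $j=1$ is immediate: $\R_0$ consists of all triangles on $xy=\rho$. For the inductive step, consider an edge $e$ at distance $j<\gc$ from $xy$; by the definition of the breadth-first process, when $e$ is processed (as one of the two new edges $av$ or $bv$ created at the step that processes the vertex $v$ carrying $e$ as a non-base edge) we add all triangles $evw$ with $w\in X_{i-1}$, i.e.\ all triangles on $e$ whose third vertex is not yet in the tree. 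The point to verify is that \emph{every} triangle of $S(xy)$ on $e$ has its third vertex outside $V_{i-1}$ at that moment: if some triangle $evw$ had $w\in V_{i-1}$ already, then $w$ and $e$ would be joined by two distinct triangle-paths from $xy$ of length $\le\gc$ (one through the earlier appearance of $w$, one through $e$ itself), contradicting the assumption that $S_\gc(xy)$---which is triangle-connected and contains both---is a tree, by \eqref{eq}. Hence all triangles on $e$ get added, proving the claim at depth $j+1$; in particular every edge of $S_\gc(xy)$ at depth $\le\gc$ is eventually processed or is a leaf-incident edge added at the last relevant step, so $S_\gc(xy)\subseteq S^*_\gc(xy)$.

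Combining the two containments with the fact that both sides are subgraphs of $S(xy)$ restricted to depth $\le\gc$, and noting that the depth of an element is computed the same way in $S(xy)$ and in $S^*(xy)$ once we know the two graphs agree (more carefully: the containment $S^*_\gc(xy)\subseteq S_\gc(xy)$ uses that distances in $S^*(xy)$ are at least those in $S(xy)$ since $S^*(xy)\subseteq S(xy)$, while the containment just proved shows any edge at $S(xy)$-distance $\le\gc$ is present with all its triangles), we get $S^*_\gc(xy)=S_\gc(xy)$.

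The main obstacle is the bookkeeping in the inductive step---specifically, being careful that ``processing the edge $e$'' is a well-defined single event (each non-root edge of the tree is created exactly once, as one of $av,bv$ when its endpoint-vertex $v$ is processed) and that the vertex $w$ of a triangle $evw\in S(xy)$ cannot already be in $Q_{i-1}$ or $P_{i-1}$; this is precisely the uniqueness property \eqref{unique} interacting with the tree hypothesis, and the clean way to phrase it is the two-distinct-triangle-paths argument above invoking \eqref{eq}. Everything else is routine unwinding of the definitions of $S_\gc$, $S^*_\gc$, and the breadth-first process.
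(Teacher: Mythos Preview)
Your approach is essentially the paper's: both reduce to showing that a ``missed'' triangle $evw$ (with $w$ already in the BFS tree when $e$ is processed) would produce two distinct triangle-paths from $xy$ to $w$ inside $S_\gc$, contradicting tree-ness. The paper packages this as a minimal-counterexample argument and you as an induction on depth, but that is cosmetic.

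There is, however, a genuine gap. You assert that the path ``through the earlier appearance of $w$'' has length $\le\gc$, but nothing in your inductive hypothesis controls the $S^*$-depth of vertices that happen to lie in $V_{i-1}$ at the moment $e$ is processed. Your IH tells you that edges at $S$-depth $<j$ are eventually processed with all their triangles, hence that $S_j^*=S_j$; it does \emph{not} tell you that, at the instant you process $e$, the tree built so far is contained in $S_j$ (or even in $S_\gc$). This is precisely what the paper isolates as its opening observation: if edges $e',e$ of $S^*$ are processed in that order then $\dist^*(xy,e')\le\dist^*(xy,e)$, proved by a one-line induction on bases. With this in hand, the edge whose processing introduced $w$ was processed before $e$, so $\dist^*(xy,w)\le\dist^*(xy,e)+1\le\gc$, and only then is the $S^*$-path to $w$ guaranteed to lie in $S_\gc$. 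Without this BFS-ordering lemma the two-paths contradiction does not close: a priori $w$ could sit at $S^*$-depth $>\gc$, and the path to it through $S^*$ need not be a path in $S_\gc$.

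One minor slip as well: the claim ``every triangle of $S(xy)$ on $e$ has its third vertex outside $V_{i-1}$'' is false for the base triangle $abv$ (there $w=b\in V_{i-1}$). This does not damage the conclusion, since $abv\in\R_{i-1}$ already, but the statement needs the proviso $w\ne b$, and the two paths in the contradiction must actually be checked to be distinct---which, as in the paper, comes down to the fact that the triangle $evw$ is not in $S^*$.
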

\begin{proof}
Set $S=S(xy)$ and $S^*=S^*(xy)$.
Notice to begin that 
if $e,f$ are edges of $S^*$ with $e$
processed before $f$, then,
with $\dist^*$ denoting distance in $S^*$,
$\dist^*(xy,e)\leq \dist^*(xy,f)$
(by induction:  the base, $e'$, of $e$ was
processed no later than the base, $f'$, of $f$, so
$\dist^*(xy,e)= \dist^*(xy,e')+1\leq \dist^*(xy,f')+1=\dist^*(xy,f)$).

Suppose the proposition fails and let $A=uvw\in \R(S_\gc)\sm \R(S_\gc^*)$ with
$\dist(xy,A)$ minimum and $e=uv$ the base of $A$ in 
the unique path $P$ from $xy$ to $A$ in $S_\gc$.  
Then $\R(P)\sm \{A\}\sub \R(S_\gc^*)$ implies
$e$ was processed 
in the construction of $S^*$
and $A$ was not added, so $w$ 
was added before $e$ was processed.
But then, by the observation 
in the last paragraph,
$\dist^*(xy,w)\leq \dist^*(xy,e)+1\leq \gc$.
Thus $S_\gc$ contains two distinct paths from $xy$ to $w$, contradicting the assumption that
$S_\gc$ is a tree.
\end{proof}

For Proposition~\ref{S*Sd} and Corollary~\ref{SSd} we assume $\gc$ is as in \eqref{gc}.

\begin{proposition}\label{S*Sd}
On $\{xy\in G\}$, we may couple $S_\gc^*(xy)$ and $S_\gc^d$ so that they are equal w.h.p.
\end{proposition}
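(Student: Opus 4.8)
The plan is to couple the breadth-first construction of $S^*_\gc(xy)$ with the branching construction of $S^d_\gc$ step by step, matching up the edge-processing of $S^*$ with the edge-births of $S^d$, and to show that with high probability no discrepancy arises before depth $\gc$. The key point is that both trees are built by a sequence of ``at each step, a given edge spawns a Poisson-ish number of new triangles'' moves; in $S^d$ the number is exactly $\Po(d)$ and perfectly independent across edges, while in $S^*$ the number of triangles produced when processing an edge $av$ is (conditionally on the history) $|N(a,v)\cap X_{i-1}|$, a binomial count on the roughly $n - |V_{i-1}|$ still-unused vertices, each appearing with probability $p^2$ independently. Since $d = (n-2)p^2 = \Theta(1)$, this binomial is $\bin(n - |V_{i-1}| - O(1), p^2)$, which by Proposition~\ref{BPC} is within total variation $O(|V_{i-1}| p^2) + O(p^2)$ of $\Po(d)$.

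First I would set up the revealing process for $S^*$ so that its randomness is exposed in exactly the order the edges are processed: when we process edge $av$, we reveal, for each vertex $w \in X_{i-1}$, whether both $aw$ and $vw$ are present (this is the event $w \in N(a,v)$), and these events are independent Bernoulli$(p^2)$'s, independent of everything revealed so far (the crucial independence is that edges at the fresh vertices $w \in X_{i-1}$ have not been touched—this is exactly the content of the ``no ambiguity'' remark $N(a,b)\cap X_{i-1} = \emptyset$ and of \eqref{unique}). On the $S^d$ side, processing the corresponding edge means drawing a fresh $\Po(d)$. I would then invoke Proposition~\ref{BPC} to couple the $S^*$-draw $\bin(n - |V_{i-1}| - O(1), p^2)$ with the $S^d$-draw $\Po(d)$ so that they disagree with probability at most $|V_{i-1}| p^2 + O(p^2) = O(|V_{i-1}|/n)$, and similarly couple the \emph{identities} of the children vertices (in $S^*$ the new $w$'s are specific vertices of $X_{i-1}$, but since the tree structure only cares about the \emph{count}, any bijection works, so there is no extra cost there). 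The coupling of the whole trees up to depth $\gc$ is the product/sequential coupling of these per-edge couplings.

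Next I would bound the total failure probability. On the good event that no per-edge coupling has failed so far, the tree $S^*_\gc$ agrees with $S^d_\gc$, and in particular has the same number of edges up to depth $\gc$; by Proposition~\ref{small.d.Sd}(b) the expected number of triangles at depth $i$ in $S^d$ is $(2d)^i/2$, so the expected total number of edges within depth $\gc$ is $O((2d)^\gc) = (O(1))^\gc$, which for $\gc$ as in \eqref{gc} is $n^{o(1)}$; a first-moment (Markov) bound then says that with probability $1 - n^{-\Omega(1)}$ we have $|V_{i-1}| \le n^{o(1)}$ throughout the construction of $S^d_\gc$. Conditioned on that, each per-edge coupling fails with probability $O(n^{o(1)}/n) = n^{-1+o(1)}$, and there are at most $n^{o(1)}$ edges processed within depth $\gc$, so a union bound gives total coupling-failure probability $n^{-1+o(1)} = o(1)$. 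Hence on $\{xy \in G\}$ the coupled pair $S^*_\gc(xy)$ and $S^d_\gc$ are equal w.h.p., which is the claim.

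**The main obstacle** I expect is the bookkeeping needed to make the sequential coupling clean: one must verify that at each processing step the relevant Bernoulli$(p^2)$ indicators are genuinely fresh (independent of the sigma-algebra generated by all earlier steps), which relies on the structural invariant \eqref{unique} and the observation $N(a,b)\cap X_{i-1}=\emptyset$, and one must handle the fact that ``processing $v$'' actually processes \emph{two} edges $av$ and $bv$ simultaneously—so a single branching step of $S^d$ (one edge spawning $\Po(d)$ triangles) should be matched to \emph{each} of $av,bv$ separately, i.e.\ the natural bijection is between processed edges of $S^*$ and edges of $S^d$, with the root $xy$ being the one edge processed ``for free'' at the start. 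A minor subtlety is that the count $n - |V_{i-1}| - O(1)$ has a data-dependent offset $|V_{i-1}|$, so the total-variation bound in Proposition~\ref{BPC} must be applied conditionally on the history and then the offset controlled by the first-moment bound above; none of this is hard, but it is the part that needs care to state correctly.
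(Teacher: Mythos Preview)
Your overall plan---couple the breadth-first exploration of $S^*_\gc(xy)$ edge by edge with the $\Po(d)$ births of $S^d_\gc$, bound the size of $S^d_\gc$ by $n^{o(1)}$ via Proposition~\ref{small.d.Sd}(b) and Markov, and union-bound the per-step coupling failures---is exactly the paper's strategy. But there is a genuine gap in the independence claim at the heart of your per-step analysis.

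You assert that when processing edge $av$ (with $a$ in the base of $v=v_i$), the indicators $\mathbf{1}\{aw\in G,\,vw\in G\}$ for $w\in X_{i-1}$ are fresh Bernoulli$(p^2)$'s, independent of the history, because ``edges at the fresh vertices $w\in X_{i-1}$ have not been touched.'' This is false for the edge $aw$: the vertex $a$ has already been processed (it lies in $P_{i-1}$), and when it was processed---say as $v_j$ with base $cd$---your scheme revealed $\mathbf{1}\{cw,\,aw\}$ and $\mathbf{1}\{dw,\,aw\}$ for all $w\in X_{j-1}\supseteq X_{i-1}$. So information about $aw$ is already in the $\sigma$-algebra of the past (e.g.\ already at the very first step after the root, $\mathbf{1}\{xw,\,yw\}=0$ for $w\in X_0$ shifts $\pr(xw\in G)$ from $p$ to $p/(1+p)$). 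The ``no ambiguity'' remark $N(a,b)\cap X_{i-1}=\emptyset$ that you cite does not help here; it concerns common neighbours of the two base vertices, not the status of $aw$ itself.

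The paper avoids this by exposing edges differently: at step $i$ it reveals $\nabla(v_i,X_{i-1})$---only the edges from the \emph{new} vertex $v_i$ to the outside---which are genuinely untouched. The set $N(a)\cap X_{i-1}$ is then already determined by the history, and the number of triangles added on $av$ is exactly $\bin(|N(a)\cap X_{i-1}|,\,p)$. The price is that one must now control $|N(a)\cap X_{i-1}|$, which the paper does via the extra degree-concentration event $Q_2=\{\exists v:\,d(v)\neq(1\pm\vs)np\}$ (Theorem~\ref{T2.1}); on $\overline{Q_1}\cap\overline{Q_2}$ the offset $|\,|N(a)\cap X_{i-1}|-(n-2)p\,|$ is at most $\vs np+\kappa$, and Proposition~\ref{BPC} gives a per-step failure bound of $O((\vs np+\kappa)p)$. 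Your write-up is missing this ingredient; once you switch to the paper's exposure, the degree-concentration step is not optional.
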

\nin
Combining this with Propositions~\ref{small.tree} and \ref{S*S} gives our main point:
\begin{corollary}\label{SSd}
On $\{xy\in G\}$, we may couple $S_\gc(xy)$ and $S_\gc^d$ so that they are equal w.h.p.
\end{corollary}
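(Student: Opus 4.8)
The plan is to prove Proposition~\ref{S*Sd} by a coupling that runs the breadth-first construction of $S^*_\gc(xy)$ and the Galton–Watson construction of $S^d_\gc$ in lockstep, matching each ``processing'' step to a ``birth'' step, and showing that the only way the two can disagree is via a low-probability collision event. First I would set up the parallel bookkeeping: in $S^d$, each edge gives birth to $\Po(d)$ triangles, i.e.\ to $2\Po(d)$ new edges; in $S^*(xy)$, processing an edge $uv$ (with base, say, $au$) adds all triangles $uvw$ with $w\in X_{i-1}$, so the \emph{attempted} number of new triangles is $|N(u,v)\cap X_{i-1}|$ (recall $N(u,v)\cap \{$already-processed vertices$\}$ is empty by the construction), and this count is $\bin(|X_{i-1}|-O(1),p^2)$ conditioned on the history, since for each candidate $w\in X_{i-1}$ the two edges $uw,vw$ are present independently with probability $p^2$ and these events are fresh (the pairs $uw$, $vw$ have not been exposed). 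Wait—I should be careful: in the breadth-first process we also need $w$ to not already be in the tree, but $w\in X_{i-1}$ exactly means this, so conditioned on the current tree the relevant randomness is genuinely unexposed.

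The core step is then a step-by-step total-variation comparison. At the $k$th edge-processing (equivalently $k$th vertex-processing, up to the factor of two), the breadth-first offspring distribution is $\bin(n - O(k), p^2)$ where the $O(k)$ accounts for vertices already in the tree (at most $2\gc$-ish levels deep, so at most $(O(\gc))^\gc$ of them, which is $n^{o(1)}$), while the ideal offspring distribution is $\Po((n-2)p^2)=\Po(d)$. By Proposition~\ref{BPC} (with $c = O(k) = n^{o(1)}$, and $p$ there being our $p^2$), each such pair can be coupled to agree except with probability $O(kp^2) + O(p^2) = n^{o(1)} n^{-1}$ (using $p = \Theta(n^{-1/2})$). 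Now I would bound the number of edges processed up to depth $\gc$: on the good event this is the size of $S^d_\gc$, whose expected number of triangles at depth $i$ is $(2d)^i/2$ by Proposition~\ref{small.d.Sd}(b), so $\E|S^d_\gc| = O((2d)^\gc) = n^{o(1)}$ for $\gc$ as in \eqref{gc}. A union bound over these $n^{o(1)}$ processing steps gives total failure probability $n^{o(1)} \cdot n^{o(1)} n^{-1} = n^{-1+o(1)} = o(1)$, so w.h.p.\ the coupling never fails and $S^*_\gc(xy) = S^d_\gc$.

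There is a chicken-and-egg subtlety to handle carefully, which is the main obstacle: the number of steps we take depends on the random tree we are building, and the per-step collision bound ($c = O(k)$) depends on how many vertices are already used. The clean way around this is to reveal $S^d$ and $S^*$ simultaneously in breadth-first order and, at each step, couple that step's offspring counts using Proposition~\ref{BPC} with $c$ equal to the \emph{current} number of in-tree vertices; then stop the argument at the first discrepancy, and separately argue that \emph{if} there were no discrepancy in the first, say, $N := (O(\gc))^{\gc+1}$ steps then $|S^d_\gc|$ (hence $|S^*_\gc|$) is at most $N$, using a crude deterministic bound on tree size in terms of depth and the Poisson/binomial offspring having been small—actually it is cleaner to note that on the event that the coupling succeeds the two trees are literally equal, so one only needs $\E|S^d_\gc|\le n^{o(1)}$ together with Markov to say $|S^d_\gc| \le n^{o(1)}$ w.h.p., and restrict attention to that event. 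The remaining routine point is checking that the freshly-exposed edge-pairs at each processing step really are independent of the history (true because $V_i = P_i \amalg Q_i$ and processing $v_i$ only looks at edges from $av_i, bv_i$ to $X_{i-1}$, none of which touch previously-examined pairs), which is exactly what the construction in Section~\ref{Sec.BP} was designed to guarantee via \eqref{unique}.

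Finally, Corollary~\ref{SSd} follows immediately: condition on $xy\in G$; by Proposition~\ref{small.tree}, $S_\gc(xy)$ is a triangle-tree with probability $1 - n^{-1+o(1)}$, and on that event Proposition~\ref{S*S} gives $S_\gc(xy) = S^*_\gc(xy)$, which by Proposition~\ref{S*Sd} is coupled to equal $S^d_\gc$ w.h.p.; combining the two failure events (each $o(1)$) gives the claimed coupling.
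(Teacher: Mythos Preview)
Your overall plan---couple the breadth-first tree $S^*_\gc(xy)$ with $S^d_\gc$ step by step using Proposition~\ref{BPC}, control the number of steps via Markov on $\E|S^d_\gc|$, then combine with Propositions~\ref{small.tree} and~\ref{S*S}---is exactly the paper's approach. The deduction of Corollary~\ref{SSd} at the end is correct.

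There is, however, a real gap in your offspring-distribution claim. When you process an edge $uv$ (with $u$ the older vertex, $v$ just entering the queue), you assert that for each $w\in X_{i-1}$ the pair $uw,vw$ is fresh, so the number of new triangles is $\bin(|X_{i-1}|-O(1),p^2)$. This is false: $u$ was processed earlier, and at that time all edges from $u$ into the then-current $X$ were exposed (either explicitly, in the paper's scheme, or implicitly, since knowing which children $u$ received forces information about $\{uw\in G\}$). Only $vw$ is fresh. The correct conditional law of the offspring count on $uv$ is $\bin(|N(u)\cap X_{i-1}|,p)$, not $\bin(|X_{i-1}|,p^2)$. (The root step $xy$ is the one place where your $\bin(n-2,p^2)$ is right.)

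This matters because $|N(u)\cap X_{i-1}|$ is a random quantity that you have not controlled; to apply Proposition~\ref{BPC} with $np$ in the role of ``$n$'' you need $|N(u)\cap X_{i-1}|$ close to $(n-2)p$. The paper handles this by introducing the global event $Q_2=\{\forall v,\,d(v)=(1\pm\vs)np\}$ (together with your tree-size event, their $Q_1$), so that on $\overline{Q_1}\cap\overline{Q_2}$ one has $|N(u)\cap X_{i-1}|=(1\pm\vs)np - O(\kappa)$ and Proposition~\ref{BPC} gives per-step failure $O((\vs np+\kappa)p)=o(1)$. Your argument omits this degree-concentration ingredient; once you add it, the proof goes through.
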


\begin{proof}[Proof of Proposition~\ref{S*Sd}]

We think of generating $S^*:=S^*(xy)\sub G$ 
by exposing edges as needed, where ``exposing'' an edge is 
deciding whether it's in $G$.
Precisely: we expose $\nabla(\{x,y\},V\sm\{x,y\})$, 
thus specifying the triangles of $S^*$ containing $xy$,
and then, for $i=1,\ldots$, $\nabla(v_i,X_{i-1})$, determining
the triangles added in the processing of $v_i$.

The number of triangles on $xy$ has law $\bin(n-2,p^2)$.
When we process $v:= v_i$, say with base $ab$, the number of triangles
added on $av$ (and similarly for $bv$) has law $\bin( |N(a)\cap X_{i-1}|,p)$
(note we do know $N(a)\cap X_{i-1}$ at this point),
which will usually be close to $\Po(d)$, since $|N(a)\cap X_{i-1}|$ is usually close to $np$.

We may think of a parallel generation of $S^d$:
when processing an edge $e$ in the generation of $S^*$, we simultaneously specify the 
number of triangles on $e$ in $S^d$, coupling so that the numbers of triangles in these 
two choices agree as often as possible.  Once the numbers agree, we may couple
so the trees themselves do as well.  
Of course this only makes sense as long as the trees agree:
if and when they do not, the coupling has failed and we lose interest.

It remains to bound the probability that the coupling fails. 
Set (with plenty of room) $\kappa = n^{0.1}$ and $\vs =n^{-0.2}$,
and define events $Q_1=\{|S^d_\gc|> \kappa\}$ and
$Q_2=\{\exists v\in V ~ d(v) \neq (1\pm \vs)np\}$.
Proposition~\ref{small.d.Sd}(b) (with Markov's Inequality) and Theorem~\ref{T2.1} imply
$\pr(Q_1) = o(1)$ and $\pr(Q_2) = n\exp[-\gO(\vs^2np)] ~(=o(1))$.

For the coupling, we use Proposition~\ref{BPC},
noting to begin that it
bounds the probability of failure when we process $xy$ by 
$\|\bin(n-2,p^2),\Po(d)\|  = O(p^2)$.

Suppose we have successfully coupled through the processing of $v_{i-1}$ 
and let $ab$ be the base of $v_i$.  The probability that the coupling now fails at (e.g.) $av_i$ is
at most 
\[
O(p[| |N(a)\cap X_{i-1}|-(n-2)p|+1]),
\] 
which is $O((\vs np+\kappa)p) = O(n^{-0.2})$ provided $d(a)=(1\pm \vs)np$ 
and $|V_{i-1}| <\kappa$.
Thus the (overall) probability that the coupling fails is at most
\[
\pr(Q_1)+\pr(Q_2) + O(p^2+\kappa n^{-0.2}) =o(1).\qedhere
\]

\end{proof}

\section{Proof of Theorem~\ref{thm.smalld}}
\label{PT1.2}

Assume first that $d=\Omega(1)$.
Since $d\leq 1/2$, 
Proposition \ref{small.d.Sd}(a) implies 
that on $\{xy\in G\}$, under the coupling of Corollary~\ref{SSd},
$S(xy)= S^d$ w.h.p.
(namely, $S(xy)= S^d$ if $S_\gc(xy)=S^d_\gc$ and 
the depth of $S^d$ is less than $\gc$, each of which is true w.h.p.).
Since $S^d$ is a tree, 
this implies that the expected number of edges (of $G$)
in triangle components that are not trees is $o(m)$,
so the \emph{actual} number is $o(m)$ w.h.p.

Now let $G_i$ run over the triangle components of $G$ and notice that, trivially,
\[
\nu(G) =\sum \nu(G_i),
\]
and similarly for $\tau$.  So, letting $\sum'$ denote sum only over $G_i$'s that are trees, and 
recalling Proposition~\ref{tree}, we have (w.h.p.)
\[
\mbox{$\tau(G) =\sum'\tau(G_i) +o(m) = \sum'\nu(G_i) +o(m) =\nu(G)+o(m)$.}
\]
This gives Theorem \ref{thm.smalld} when combined with 
\beq{iso.triangles}  
\mbox{$\nu(G)= \gO(m)$ w.h.p.}
\enq    

\begin{proof}[Proof of \eqref{iso.triangles}]
By \eqref{martingale} it's enough to show 
$\E\nu(G)= \gO(m)$.  But $\nu(G)$ is at least the number of isolated triangles in $G$
(an \emph{isolated} triangle being one sharing no edges with other triangles),
and the expected number of these is exactly
\[
\C{n}{3}p^3 (1-3p^2+2p^3)^{n-3} = \gO(m).
\]
\nin(Of course this---with the asymptotics of the number of
isolated triangles---could also be read off from the coupling with $S^d$.)\end{proof}

Now suppose $d\ll 1$ (i.e.\ $p\ll n^{-1/2}$).  Let $Y$ be the number of triangles in $G$,
$Y'$ the number of non-isolated triangles and $X$ the number of edges that lie in exactly one triangle.
For Theorem~\ref{thm.smalld}
it is enough to show that w.h.p.\ $Y'\ll Y$ (i.e.\ almost all triangles are isolated).

For $p\ll n^{-4/5}$ we just observe that $Y' =0$ w.h.p., since the expected 
number of pairs of triangles sharing an edge is $O(n^4p^5)$.
For larger $p$ ($p\gg 1/n$ is enough here), notice that 
$X\leq 3Y-Y'$.
From Theorem~\ref{2nd.m.m} we have
\beq{YEY}
\mbox{w.h.p.
$~Y\sim \E Y = \C{n}{3}p^3\sim n^3p^3/6,$}
\enq
while $p\ll n^{-1/2}$ gives
$
\E X = \C{n}{2}p(n-2)p^2(1-p^2)^{n-3} \sim n^3p^3/2\sim 3\E Y.
$
But then $\E Y' \leq 3\E Y-\E X \ll \E Y$ implies $Y'\ll \E Y$ w.h.p., which with 
\eqref{YEY} gives $Y'\ll Y$ w.h.p.

\section{Proof of Theorem~\ref{thm.nu}}

Given a graph $H$ and $w:\R(H)\ra [0,1]$
(values of $w$ will always be called \emph{weights}), 
we use $\cM^*_w$ for the greedy (triangle-)matching corresponding to $w$; namely: we
consider triangles in (increasing) order of their weights, and at each step 
add the triangle under consideration to $\cM^*_w$ iff it shares no edge with any triangle
already in the matching.
In particular when 
\beq{indw}
\mbox{$w$ is uniform from $[0,1]^{\R(H)}$,}
\enq
$\m^*:=\m^*_w$ is the usual
random greedy matching of $H$.
(Strictly speaking we have defined $\m^*_w$ only when the weights are distinct;
but for $w$ as in \eqref{indw}, this is true with probability 1
and will not be a concern.)

We will show that for any $x,y \in V$,
\beq{MP} 
\pr(xy \notin E(\cM^*) |~xy \in G)\ra (2d+1)^{-1/2},
\enq
where $\pr$ refers to the choices of $G$ and $w$.
This implies $\E\nu(G) >(1-o(1))(2d+1)^{-1/2}m$, which  with 
\eqref{martingale} gives Theorem~\ref{thm.nu}.
The proof of \eqref{MP}, which is inspired by \cite{Spencer}, is based on the connection with 
$S^d$ in Corollary~\ref{SSd}.
We need a few simple notions and observations.

\mn 

For a finite triangle-tree $T$ we work with the following recursive \emph{survival rule} for edges,
in which we may evaluate edges in any order for which each edge appears earlier than its base
(further specification of the order doesn't affect the outcome),
and ``dies'' means fails to survive:
\beq{rule}
\mbox{$e $ dies iff it is the base of a triangle whose other two edges survive.}
\enq
(For example, any edge that is the base of no triangle 
survives.)

\mn

For a general graph $H$, $e\in H$ and $w$ as in \eqref{indw}, 
let
\[
T(e)=\{f \in H: \mbox{there is a triangle-path from $e$ to $f$ on which the weights 
of the triangles decrease}\}
\]
(a random subgraph of $S(e)$).

It is easy to see that if $T(e)$ is a tree then $e$ is covered by $\m^*$ 
iff it dies when we apply \eqref{rule} to $T(e)$.
(In this case it's natural to think of evaluating edges in increasing order of their weights.
The present survival rule is the same as that of \cite{Spencer} applied to the (3-uniform) hypertree
with vertices $E(T)$ and edges $\R(T)$ (and the natural incidences).)

When $H=S^d$ we use $T^d$ for $T(\rho)$ (recall $\rho$ is the root of $S^d$).
As for \eqref{MP}, when we speak of $T^d$
(in Proposition~\ref{finite} and Lemma~\ref{survival}), ``probability'' refers to
the choices of both $S^d$ and $w$.

\begin{proposition}\label{finite}
$T^d$ is finite with probability $1$.
\end{proposition}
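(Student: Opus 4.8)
\textbf{Proof proposal for Proposition~\ref{finite}.}
The plan is to compare $T^d$ with the ambient tree $S^d$ and exploit the fact that, on any infinite path of $S^d$, the triangle-weights must eventually fail to decrease. More precisely, recall from Proposition~\ref{small.d.Sd} that $S^d$ is a.s.\ finite when $d\le 1/2$, so in that regime there is nothing to prove and we may assume $d>1/2$ (so that $S^d$ is infinite with positive probability). The key observation is that $T^d$ is a \emph{subtree} of $S^d$ rooted at $\rho$, obtained by retaining an edge $f$ precisely when there is a triangle-path $\rho=f_0, f_1,\dots, f_k=f$ along which the weights of the successive triangles strictly decrease. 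Thus if $T^d$ were infinite, by König's Lemma it would contain an infinite ray from the root, which would correspond to an infinite sequence of triangles in $S^d$ with strictly decreasing weights. Since the weights are i.i.d.\ uniform on $[0,1]$ (hence a.s.\ distinct), no infinite strictly-decreasing sequence exists along any fixed ray. But $S^d$ has (a.s.) only countably many rays, each occurring with the fixed branching structure, so we cannot immediately conclude; the point is rather to bound the \emph{expected size} of $T^d$ directly.

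Concretely, I would compute $\E|\cR(T^d)|$ by summing over potential triangles. Fix a candidate triangle $A$ at depth $i$ in $S^d$, sitting on a specific path $\rho = A_0, A_1, \dots, A_i = A$ from the root (where $A_0$ is interpreted as the root edge). Conditioned on this path existing in $S^d$, the triangle $A$ lies in $\cR(T^d)$ exactly when the weights satisfy $w(A_1) > w(A_2) > \cdots > w(A_i)$ (using the survival/decreasing-path definition of $T(e)$, together with the tree structure which guarantees the path is unique). Since the weights are i.i.d.\ uniform, this event has probability exactly $1/i!$. By Proposition~\ref{small.d.Sd}(b), the expected number of triangles of $S^d$ at depth $i$ is $(2d)^i/2$, so
\[
\E|\cR(T^d)| \;=\; \sum_{i\ge 1} \frac{(2d)^i}{2}\cdot\frac{1}{i!} \;=\; \tfrac12\!\left(e^{2d}-1\right) \;<\;\infty.
\]
Hence $|\cR(T^d)|$ is a.s.\ finite, and therefore $T^d$ itself (having at most $2|\cR(T^d)|+1$ edges) is a.s.\ finite, which is the assertion.

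The main subtlety to get right is the claim that, conditioned on the branching structure of $S^d$, the weights along the unique root-to-$A$ path are i.i.d.\ uniform and independent of whether the path exists — this is immediate because the construction of $S^d$ (the Galton–Watson branching) uses only the $\Po(d)$ offspring counts and is carried out independently of the weights $w$, which are drawn as in \eqref{indw} over all of $\cR(S^d)$. A second point requiring a line of care is that $A\in\cR(T^d)$ really is equivalent to the monotonicity of weights along \emph{that one} path (rather than along some path): since $S^d$ is a tree, there is exactly one triangle-path from the root to $A$, so ``there exists a decreasing triangle-path'' and ``the triangle-path is decreasing'' coincide. With those two observations in hand, the summation above is the whole proof; I do not expect any genuine obstacle, only the bookkeeping of disentangling the randomness of $S^d$ from that of $w$.
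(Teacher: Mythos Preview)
Your proof is correct and takes essentially the same approach as the paper: both compute that the expected number of triangles of $T^d$ at depth $i$ is $(2d)^i/(2\,i!)$, using Proposition~\ref{small.d.Sd}(b) together with the $1/i!$ probability that the $i$ i.i.d.\ uniform weights along the unique path are decreasing. The only cosmetic difference is that the paper simply notes this quantity tends to $0$ (so $\pr(\text{depth}\ge i)\to 0$), whereas you sum over $i$ to get $\E|\cR(T^d)|=\tfrac12(e^{2d}-1)<\infty$; your opening paragraph on K\"onig's Lemma is unnecessary but harmless.
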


\begin{proof}
By Proposition~\ref{small.d.Sd}(b), the expected number of triangles of depth $i$ in $T^d$ is $(2d)^i/(2i!)$, which tends to zero as $i \rightarrow \infty$.
\end{proof}
Proposition~\ref{finite} and Corollary~\ref{SSd} imply
\[
\mbox{on $\{xy\in G\}$ we may couple $T(xy)$ and $T^d$ to agree w.h.p.}
\]
(namely, we can couple so $T(xy)=T^d$ whenever $S_\gc(xy)=S_\gc^d$ and the 
depth of $T^d$ is less than $\gc$).

In view of the preceding comments, this says that the probability in \eqref{MP}
tends to the probability that the root survives in $T^d$; so the proof of \eqref{MP}
(and Theorem~\ref{thm.nu}) is completed by the following calculation.

\begin{lemma}\label{survival}
Under \eqref{rule} the root of $T^d$ survives with probability
$(2d+1)^{-1/2}$.
\end{lemma}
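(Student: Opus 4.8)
\textbf{Proof plan for Lemma~\ref{survival}.}
The plan is to set up a fixed-point equation for the survival probability of the root of $T^d$ by exploiting the recursive (Galton--Watson-like) structure of $S^d$ together with the ``decreasing-weight'' pruning that defines $T^d$. First I would fix the root edge $\rho$ and condition on the random weight $w(\rho)=u$ of $\rho$ (strictly speaking $\rho$ has no weight, so instead I would condition on the weights of the triangles having $\rho$ as their base; let me write the argument in terms of a generic edge $e$ with ``incoming'' weight $u$, i.e.\ $e$ is reached along a decreasing path whose last triangle has weight $u$). Under \eqref{rule}, $e$ dies iff some triangle $\Delta$ with base $e$ has both of its other two edges surviving; and $\Delta$ (together with its subtree in $T^d$) is present in $T^d$ only if $w(\Delta)<u$. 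So, writing $q(u)$ for the probability that $e$ \emph{survives} given incoming weight $u$, and $s(u)$ for the probability that a single non-base edge $f$ of such a $\Delta$ survives given that the triangle $\Delta$ above it has weight $w(\Delta)$, I would first note by symmetry/independence that the two non-base edges of a given $\Delta$ behave independently, each surviving with the appropriate conditional probability.

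The key computation is then: the triangles with base $e$ that actually appear in $T^d$ are exactly those among the $\Po(d)$ children (in $S^d$) whose weight is less than $u$; since the weights are i.i.d.\ uniform on $[0,1]$ and independent of the $\Po(d)$ count, the number of such ``active'' triangles is $\Po(du)$. Each active triangle $\Delta$ independently ``kills'' $e$ with probability equal to the product of the two survival probabilities of its non-base edges. For a non-base edge $f$ of $\Delta$, the relevant incoming weight is $w(\Delta)$, which, conditioned on being $<u$, is uniform on $[0,u]$; so averaging, I would get a clean recursion. Concretely, letting $r(u)=\pr(e\text{ survives}\mid \text{incoming weight }u)$, thinning gives
\[
r(u)=\exp\!\Big[-du\cdot\mathbb{E}_{v\sim U[0,u]}\big[r(v)^2\big]\Big]
=\exp\!\Big[-d\!\int_0^u r(v)^2\,dv\Big],
\]
using the Poisson ``survival/void'' identity $\pr(\Po(\lambda)=0)=e^{-\lambda}$ and independence across the (conditionally Poisson many) active triangles. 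This is the heart of the proof. Setting $F(u)=\int_0^u r(v)^2\,dv$, we get $F'(u)=r(u)^2=e^{-2dF(u)}$, hence $(e^{2dF})'=2d$, so $e^{2dF(u)}=1+2du$ and $F(u)=\frac{1}{2d}\log(1+2du)$, giving $r(u)=e^{-dF(u)}=(1+2du)^{-1/2}$.

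Finally I would unwind the conditioning at the root: $\rho$ survives iff none of its $\Po(d)$ incident triangles (here \emph{all} of them are active, since there is no incoming weight constraint at the root --- equivalently ``incoming weight $=1$'') kills it, so
\[
\pr(\rho\text{ survives})=\exp\!\Big[-d\cdot\mathbb{E}_{v\sim U[0,1]}\big[r(v)^2\big]\Big]
=\exp\!\Big[-d\!\int_0^1 r(v)^2\,dv\Big]=e^{-dF(1)}=r(1)=(2d+1)^{-1/2},
\]
which is the claim. The main obstacle I anticipate is justifying the recursion rigorously: one must argue that the survival events of the two non-base edges of a triangle (and of distinct triangles) are genuinely independent given the weight data along the path from the root, that the ``$\Po(d)$ children, each kept with probability $u$'' thinning is valid because weights are independent of offspring counts, and --- crucially --- that conditioning on $T^d$ being a tree (so that \eqref{rule} correctly computes coverage) does not bias these computations. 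Proposition~\ref{finite} guarantees $T^d$ is a.s.\ finite, so the recursion is well-founded (one can formalize it by truncating at depth $k$, showing the depth-$k$ survival probabilities $r_k(u)$ decrease to a limit satisfying the integral equation, and invoking dominated/monotone convergence); the uniqueness of the bounded solution of $F'=e^{-2dF}$, $F(0)=0$, then pins down $r$ and yields the stated value.
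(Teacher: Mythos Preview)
Your proposal is correct and follows essentially the same approach as the paper: define the survival probability $r(u)$ of an edge with ``incoming weight'' $u$ (the paper assigns $w(e)$ to be the weight of the triangle above $e$, and writes $f(x)$ for your $r(u)$), derive the integral equation via the Poisson offspring law, reduce to $F'=e^{-2dF}$ with $F(0)=0$, and evaluate at $u=1$ for the root. The only cosmetic difference is that you invoke Poisson thinning and the void probability $e^{-\lambda p}$ directly, whereas the paper writes out $\sum_k \pr(Z=k)(1-F)^k$ and simplifies; these are the same computation.
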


\begin{proof}

It will be convenient to extend $w$ to edges:  set $w(\rho) =1$,
and for any other $e\in E(T^d)$ let $w(e)$ be the weight of the (unique) triangle on $e$ with minimum
depth. 

Let $f(x)$ be the probability that an edge of weight $x$ survives. 
Trivially, $f(0)=1$. The survival rule \eqref{rule}
says that an edge $e$ dies iff there is a child (triangle) of $e$ in $S^d$, 
say with edges $e,j,k$, such that
\beq{x.survive} 
\mbox{($w(j)=$) $~ w(k) < w(e)$ and both $j$ and $k$ survive.}
\enq
Given $w(e)$ (with $e,j,k\in S^d$ as above), the probability of \eqref{x.survive} 
is $\int_0^{w(e)} f^2(y)dy$, implying
\beq{f} f(x)=\sum_k\pr(Z=k)\left[ 1-\int_0^x f^2(y)dy\right]^k \enq
where $Z \sim \Po(d)$. Rewriting \eqref{f} with $F(x)=\int_0^x f^2(y)dy$ gives
\[
F'(x)=\left[ \sum_k\pr(Z=k)(1-F(x))^k\right]^2=e^{-2dF(x)}; \quad F(0)=0.
\]
The solution to this is
\[F(x)=\frac{1}{2d}\ln(2dx+1),\]
so we have
\[f(x)=F'(x)^{1/2}=(2dx+1)^{-1/2},\]
and the lemma follows.
\end{proof}

\section{Proof of Theorem \ref{thm.tau}}\label{PTtau}

For a partition $X \cup Y$ of $V=V(G)$ (we call each of $X, Y$ a \textit{block}), let $W=W(X,Y)=(W_0 \setminus W_1) \cup W_2$, where:

\begin{itemize}
\item $W_0= G[X] \cup G[Y]$;

\item $W_1=\{xy \in W_0: \mbox{all triangles on $xy$ are contained in the same block as $xy$}\}$;

\item $W_2=\{xy \in W_1: \mbox{there is a triangle $xyz$ with $xz, yz \in W_1$}\}$.

\end{itemize}

\nin It is easy to see that $W$ is a cover of $G$. 

For Theorem \ref{thm.tau}, again by \eqref{martingale},
it suffices to show that for $X\cup Y$ a uniformly random partition of $V$ (so each $v \in V$ is in $X$ with probability $1/2$, these choices made independently) and $W=W(X,Y)$,
\beq{not.in.S} \pr(xy \in W ~|~xy \in G) ~\ra~ \frac{1}{2}\left[1-\exp\left(-\frac{d}{2}(1+e^{-d})\right)\right].\enq

\begin{proof}[Proof of \eqref{not.in.S}]

\nin Set $Q=\{xy \in G\}$ and note to begin that 
\beq{ES1} 
\pr(xy \in W_0|Q)=1/2.
\enq
Set $p_k=e^{-d}d^k/k!$.  On $Q$ the distribution of the number of triangles on 
$xy$ is $\bin(n-2, p^2) \rad \Po(d)$, so
\beq{ES2} 
\pr(xy \in W_1|Q) \sim \sum_{k\ge 0} p_k2^{-k-1}=e^{-d/2}/2.
\enq
For $W_2$ we use Corollary~\ref{SSd}, now with $\gc=2$. 
Assigning vertices of $S^d_2$ to $X$ and $Y$ in the same way as 
vertices of $V$ (i.e.\ \emph{via} independent fair coin tosses), 
we may extend the coupling of the corollary to these choices so that
$xy\in W_2 \Leftrightarrow \rho\in W_2$ whenever $S_2(xy)=S^d_2$;
yielding
\beq{prxyW}
\pr(xy\in W_2|Q) = \pr(\rho\in W_2) + o(1)
\enq
(where as usual $o(1)$ can be negative).  On the other hand,
\beq{rhoW}
\pr(\rho \in W_2) =  \sum_{k\ge1} p_k2^{-k-1}(1-(1-\alpha)^k),
\enq
where 
\[
\alpha= \sum_{l,m \ge 0} p_lp_m2^{-l}2^{-m}=
\left[e^{-d}\sum_{l\ge0}\frac{d^l2^{-l}}{l!}\right]^2=e^{-d}\]
is the probability that, given $\{x,y,z\}\sub X$ (e.g.), 
all triangles containing either of $xz$, $yz$ also lie in $X$. 
Then rewriting the r.h.s.\ of \eqref{rhoW} as
\[
\frac{e^{-d}}{2}\left[e^{d/2}-\exp\left(\frac{d}{2}(1-e^{-d})\right)\right]
\]
and combining with \eqref{ES1}-\eqref{prxyW} gives \eqref{not.in.S}.
\end{proof}

\section{Proof of Theorem~\ref{PP}}\label{SP}

For $d>\log^{3+\eps} n$ Theorem~\ref{PP} was proved 
(somewhat implicitly) in \cite{FR85},
and, as observed in \cite{BDZ},
direct application of Pippenger's Theorem improves this to $d\gg \log n$,
where w.h.p.\ each edge of $G$ is in $(1+o(1))d$ triangles.
(Pippenger's Theorem was never published and first appeared in \cite{Furedi};
see also e.g. \cite[Theorem~4.7.1]{AS}.)

In fact Pippenger's Theorem can also be used 
to prove Theorem~\ref{PP}, but we will find it convenient to use the following variant,
a simplest instance of
\cite[Theorem 1.5]{JK}.  (For fractional things see e.g.\ \cite{SU}.)

For a hypergraph $\h$ and $\vp:\h\ra [0,1]$, let
\[
\ga(\vp) = \max\sum\{\vp(A):x,y\in A\in\h\},
\]
the max over distinct vertices $x,y$ of $\h$.

\begin{theorem}\label{TJK}
For fixed r, if $\h$ is r-uniform and $\vp:\h\ra [0,1]$ is a fractional matching,
then 
\[
\nu(\h) > (1-o(1))\sum_{A\in \h}\vp(A),
\]
where $o(1)\ra 0$ as $\ga(\vp)\ra 0$.
\end{theorem}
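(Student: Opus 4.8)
\emph{Proof plan.}
This is (a special case of) a theorem of Kahn \cite[Theorem 1.5]{JK}, an LP/fractional refinement of the Frankl--R\"odl--Pippenger theorem, so the plan is to reprove it by the \emph{R\"odl nibble} (the semi-random method), carrying a fractional matching along on the evolving residual hypergraph.

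First I would fix a small constant $\eps>0$ and set $\h_0=\h$, $\vp_0=\vp$, and $W_i=\sum_{A\in\h_i}\vp_i(A)$. In round $i$ I would run a nibble on $\h_i$: include each edge $A\in\h_i$ independently with probability $\eps\vp_i(A)$ (legitimate since $\ga(\vp_i)$ small forces $\vp_i(A)\le\ga(\vp_i)\le 1$), forming a random set $N_i$; let $M_i\sub N_i$ consist of those edges meeting no other edge of $N_i$; delete the vertex set $V(M_i)$; and let $\h_{i+1}$ be the edges of $\h_i$ disjoint from $V(M_i)$. The output is $\bigcup_i M_i$, and the goal is $\sum_i|M_i|\ge(1-o(1))W_0$, with the $o(1)$ controlled by $\eps$ and by $\sup_i\ga(\vp_i)$.

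Next I would do the per-round bookkeeping, which should be routine: since an included edge $A$ lands in $M_i$ unless one of the ($O(r)$ worth of weight of) edges meeting it is also included --- an event of probability $O(r\eps)$ --- one gets $\E|M_i|=(1+O(\eps+\ga))\,\eps W_i$, and standard martingale concentration (as with McDiarmid's inequality above, now applied to the independent inclusion choices) makes $|M_i|$ and the other tracked quantities concentrated about their means while $W_i$ is not too small. Here the role of $\ga(\vp_i)$ being small is exactly to keep the ``conflict loss'' negligible: it forces all edge-weights and all pair-codegree-weights to be at most $\ga(\vp_i)$.

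I expect the main obstacle to be producing the next fractional matching. I need $\vp_{i+1}:\h_{i+1}\ra[0,1]$ with $\ga(\vp_{i+1})=o(1)$ and, crucially, $W_{i+1}:=\sum_{A\in\h_{i+1}}\vp_{i+1}(A)\ge W_i-|M_i|-o(W_i)$ --- i.e.\ the residual hypergraph's fractional matching number should drop by essentially exactly the number of edges banked, not by $r$ times that. The naive choice $\vp_i|_{\h_{i+1}}$ is too wasteful (it can cost a factor as large as $r$), so I would rescale $\vp_i$ on $\h_{i+1}$ --- equivalently, re-solve the matching LP on $\h_{i+1}$ --- and then \emph{prove} that the rescaled weights still obey the fractional-matching constraint at every surviving vertex w.h.p. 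This is precisely where the hypothesis $\ga(\vp)\ra0$ is essential, and it is the one genuinely delicate point; it is the part I would import from \cite{JK} rather than redo. Granting it, the iteration multiplies $W_i$ by $1-(1+o(1))\eps$ while banking $(1+o(1))\eps W_i$ edges, so after $O(\eps^{-1}\log(1/\delta))$ rounds $W_i<\delta W_0$ and $\sum_i|M_i|\ge(1-\delta-o(1))W_0$; letting $\delta\ra0$ and $\eps\ra0$ slowly enough (relative to the size of $\h$) yields $\nu(\h)>(1-o(1))\sum_A\vp(A)$, with error $\ra0$ as $\ga(\vp)\ra0$.
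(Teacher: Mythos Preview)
The paper does not prove Theorem~\ref{TJK}; it is quoted as \cite[Theorem~1.5]{JK} and used as a black box (with only the parenthetical remark that the extra hypothesis $\sum\vp(A)\ra\infty$ in the original can be dropped). So there is nothing on the paper's side to compare your attempt against.

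Your sketch---a R\"odl nibble carrying a fractional matching through the rounds---is indeed the spirit of Kahn's original argument in \cite{JK}, and the outline is sound as far as it goes. But you yourself identify the one genuinely nontrivial step (showing that the residual fractional matching value drops by essentially $|M_i|$ rather than by as much as $r|M_i|$, i.e.\ that one can produce $\vp_{i+1}$ with $W_{i+1}\ge W_i-|M_i|-o(W_i)$) and then say you would ``import [it] from \cite{JK} rather than redo'' it. That step is the whole content of the theorem; everything else in your outline is the standard nibble bookkeeping. So what you have written is, in effect, the same citation the paper makes, wrapped in a plausible summary of the method---which is fine as context, but is not an independent proof.
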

\nin
(The statement in \cite{JK} also assumes $\sum\vp(A)\ra\infty$, but this is easily seen to be unnecessary.)

We will (of course) use Theorem~\ref{TJK} with $\h=\R(G)$ (and $V(\h)=E(G)$).
Let $1\gg\vs \gg d^{-1/2}$ (recall $d\gg 1$) and $D=(1+\vs)d$.
Say $e\in E(G)$ is \emph{heavy} if it lies in at least $D$ triangles,
and define the fractional
matching $\vp:\h\ra [0,1]$ by
\[
\vp(A)=\left\{\begin{array}{ll}
1/D&\mbox{if $A$ contains no heavy edges,}\\
0&\mbox{otherwise.}
\end{array}\right.
\]
Of course $\ga(\vp)\ra 0$, so to get Theorem~\ref{PP} from Theorem~\ref{TJK} we just need
\[
\mbox{w.h.p. $\sum \vp(A)\sim m/3$;}
\]
this will follow from 
\beq{unheavy}
\mbox{w.h.p.\ the number of triangles of $G$ containing heavy edges is $o(n^3p^3)$.}
\enq
For if \eqref{unheavy} is true then, since $|\R(G)|\sim \C{n}{3}p^3$ w.h.p.
(see Theorem~\ref{2nd.m.m}) and $D\sim np^2$, we have
\[
\mbox{w.h.p. $~\sum\vp(A) \sim n^3p^3/(6D)\sim n^2p/6\sim m/3$.}
\]
Finally,
for $x,y\in V$,
Theorem~\ref{T2.1} bounds the probability that $xy$ (is in $G$ and) lies in at least 
$(1+\gc)d$ triangles by $p\exp[- \gc^2 d/[2(1+\gc/3)]$; so the expected 
number of triangles containing heavy edges is less than
\[
\C{n}{2}p\sum_{i\geq 0}\exp[- 2^{2i} \vs^2d/(2(1+2^i\vs/3))] (1+2^{i+1}\vs) d 
= o(n^3p^3),
\]
and Markov's Inequality then gives \eqref{unheavy}.

\mn
\textbf{Acknowledgment.}  We thank David Galvin for telling us the problem.

\appendix
\section{Proof of Lemma~\ref{C}}\label{Sec.C}

There is nothing very interesting here and we aim to be brief.
The lemma is easy 
when $d \ge 8$, since then $\xi(d)>1/4$ (while $\psi(d)<1/2$ for all $d$). 
For $d \in [1/2,8]$, we show the inequality in the form
\beq{equiv} 4(2d+1)^{-1/2}-3\exp(-\frac{d}{2}(1+e^{-d}))<1.\enq

Let
\[f(d)=4(2d+1)^{-1/2}; \quad g(d)=3e^{-d/2}; \quad h(d)=\exp(-\frac{d}{2}e^{-d}).\]
(So the l.h.s.\ of \eqref{equiv} is $f-gh$.) It is easy to see that 
\beq{hd} 
\mbox{$h(d)$ is decreasing on $[0,1]$ and increasing on $[1,\infty)$}
\enq
and 
\beq{fg.convex} 
\mbox{$f$ and $g$ are convex.}
\enq

\nin \textit{Case 1}:  $d \in [k, k+1]$ $(k=1,2, \ldots, 7)$

Fix $k$ and let $f_1(\cdot)$ be the line through $(k,f(k))$ and $(k+1,f(k+1))$, 
and $g_1(\cdot)$ the tangent to 
$g$ at $(k+1,g(k+1))$.  From \eqref{hd} and \eqref{fg.convex} we have
\[f(d)-g(d)h(d) \le f_1(d)-g_1(d)h(k),\]
so it suffices to show (for $d\in [k,k+1]$)
\beq{linear} 
f_1(d)-g_1(d)h(k)< 1.
\enq
But the l.h.s.\ of \eqref{linear} is a linear function of $d$, so it's enough to check
the inequality at the endpoints---which we won't, but 
for example, when $k=1$, the function is $Ad+B$, with
\[A=\frac{4}{5}\sqrt 5-\frac{4}{3}\sqrt 3+\frac{3}{2}e^{-(1/2e+1)} (\approx -0.06144) <0,\]
\[B=\frac{8}{3}\sqrt 3 - \frac{4}{5}\sqrt 5 -6e^{-(1/2e-1)} (\approx 0.99352) <1\]
(so \eqref{linear} holds). Other $k$'s are similar.

\nin \textit{Case 2}: $d \in [1/2,1]$

Here we take $f_1(\cdot)$ to be the line through $(1/2,f(1/2))$ and $(1,f(1))$, 
and $g_1(d)$ the tangent to $g$ at $(1,g(1))$, and again just need to show 
the analogue of \eqref{linear}, i.e.
\beq{linear'}
f_1(d)-g_1(d)h(1)<1.
\enq
The l.h.s.\ of this is $Ad+B$, with
\[
A=\frac{8}{3}\sqrt 3-4\sqrt 2+\frac{3}{2}e^{-1/2(1/e+1)} (\approx-0.28111) <0,
\]
\[
B=4\sqrt 2-\frac{4}{3}\sqrt 3-\frac{9}{2}e^{-1/2(1/e+1)} (\approx 1.07664),
\]
so is maximized (on $[1/2,1]$) at $d=1/2$, where it is strictly less than $1$.

\begin{figure}[h]
\includegraphics[width=6cm]{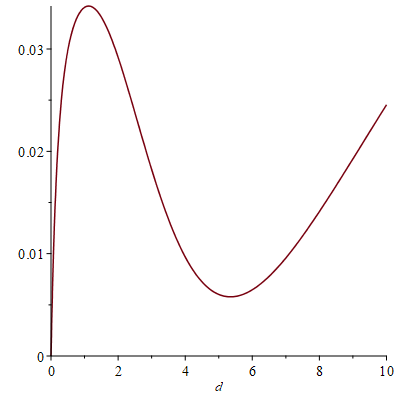}
\caption{$2\xi(d)-\psi(d)$ for $d \in [0,10]$}\label{Figure1}
\end{figure}

\end{document}